\theoremstyle{theorem}
\newtheorem{theorem}{Theorem}[section]
\newtheorem{lemma}[theorem]{Lemma}
\newtheorem{cor}[theorem]{Corollary}
\theoremstyle{definition}
\newtheorem{conjecture}[theorem]{Conjecture}
\DeclareMathOperator{\lcm}{lcm}
\newcommand{\ov}{\overline}
\newcommand{\calc}{{\cal C}}
\newcommand{\dd}{\displaystyle}
\title{\bf A generalization of the Euler's totient function}
\author{Marius T\u arn\u auceanu}
\date{December 5, 2013}
\begin{document}

\maketitle

\begin{abstract}
The main goal of this paper is to provide a group theoretical
gene\-ralization of the well-known Euler's totient function. This
determines an interesting class of finite groups.
\end{abstract}

\noindent{\bf MSC (2010):} Primary 20D60, 11A25; Secondary 20D99,
11A99.

\noindent{\bf Key words:} Euler's totient function, finite group,
order of an element, exponent of a group.
\bigskip

\section{Introduction}

The {\it Euler's totient function} (or, simply, the {\it totient
function}) $\varphi$ is one of the most famous functions in number
theory. Recall that the totient $\varphi(n)$ of a positive integer
$n$ is defined to be the number of positive integers less than or
equal to $n$ that are coprime to $n$. The totient function is
important mainly because it gives the order of the group of all
units in the ring ($\mathbb{Z}_n$, +, $\cdot$). Also, $\varphi(n)$
can be seen as the number of generators of the finite cyclic group
($\mathbb{Z}_n$, +).

Many generalizations of the totient function are known (for
example, see \cite{5}, \cite{6}, \cite{13} and the special chapter
on this topic in \cite{11}). From these, the most significant is
probably the {\it Jordan's totient function} (see \cite{4}).

In this paper we will introduce and study a new generalization of
$\varphi$ that uses group theory ingredients.

A basic result on finite groups states that the order $o(\hat{a})$
of an element $\hat{a}\in \mathbb{Z}_n$ is given by the formula
$$o(\hat{a})=\frac{n}{\gcd(a,n)}\hspace{0,5mm}.$$It shows that $\varphi(n)$
is in fact the number of elements of order $n$ in $\mathbb{Z}_n$,
or equivalently
$$\varphi(n)=\hspace{1mm}\mid\hspace{-1mm}\{\hat{a}\in
\mathbb{Z}_n \mid o(\hat{a})={\rm
exp}(\mathbb{Z}_n)\}\hspace{-1mm}\mid\hspace{0,5mm},$$where ${\rm
exp}(\mathbb{Z}_n)$ denotes the exponent of $\mathbb{Z}_n$. This
expression of $\varphi(n)$ in which only group theoretical notions
are involved constitutes the starting point for our discussion. It
can naturally be extended to an arbitrary finite group $G$, by
putting
$$\varphi(G)=\hspace{1mm}\mid\hspace{-1mm}\{a\in
G \mid o(a)=\exp(G)\}\hspace{-1mm}\mid.$$Since
$\varphi(\mathbb{Z}_n)=\varphi(n)$, for all $n\in\mathbb{N}^*$, a
generalization of the classical totient function $\varphi$ is
obtained. Notice that the above $\varphi$ is not a function (more
exactly, by endowing $\mathbb{N}$ with a category structure and
defining a suitable action of $\varphi$ on group homomorphisms, it
can be seen as a functor from the category of finite groups to
this category).

The paper is organized as follows. Some basic properties and
results on $\varphi$ are presented in Section 2. In Section 3 we
study the connections of $\varphi(G)$ with
$\varphi(\mid\hspace{-1mm} G\hspace{-1mm}\mid)$ and
$\mid\hspace{-1mm}{\rm Aut}(G)\hspace{-1mm}\mid$\,. Section 4
deals with the class of finite groups $G$ for which
$\varphi(G)\neq 0$. In the final section several conclusions and
further research directions are indicated.

Most of our notation is standard and will not be repeated here.
Basic definitions and results on group theory can be found in
\cite{8} and \cite{14}. For number theoretic notions we refer the
reader to \cite{4}, \cite{10} and \cite{11}.
\bigskip

\section{Basic properties of $\varphi$}

First of all, we study some basic properties of $\varphi$ derived
from similar pro\-perties of the classical totient function.

Clearly, $\varphi$ preserves isomorphisms, that is the group
isomorphism $G_1 \cong G_2$ implies that
$\varphi(G_1)=\varphi(G_2)$. Also, for
any finite cyclic groups $G$, we have
$\varphi(G)=\varphi(\mid\hspace{-1mm} G \hspace{-1mm}\mid)$.
Then $\varphi(\mathbb{Z}_3)=
\varphi(\mathbb{Z}_4)$, but the groups $\mathbb{Z}_3$ and
$\mathbb{Z}_4$ are not isomorphic, a property which corresponds to
the non-injectivity of the totient function.

Regarding the values of $\varphi$, we observe that for a finite
group $G$ with $\exp(G)= m$, we have $\varphi(G)=\varphi(m)k$,
where $k$ is the number of cyclic subgroups of order $m$ in $G$.
It is well-known that $\varphi(m)$ is even for all $m\geq3$. On
the other hand, if $m=2$, then $G$ is an elementary abelian
2-group, say $G\cong\mathbb{Z}_2^n$, and we easily get $k=2^n-1$.
Consequently, the only odd numbers contained in $Im(\varphi)$ are
of the form $2^n-1$, $n\in \mathbb{N}^*$.
\bigskip

Another basic property of the totient function is the following:
$$m \mid n \hspace{1mm}\Longrightarrow\hspace{1mm} \varphi(m) \mid \varphi(n) \mbox{ and }
\varphi(m)\leq \varphi(n).$$This implication fails for the
generalized $\varphi$. Indeed, by taking a subgroup $K \cong
\mathbb{Z}_2 \times \mathbb{Z}_2$ of the dihedral group $D_8$, one
obtains $\varphi(K)=3>2=\varphi(D_8)$.
\bigskip

Next we will focus on computing $\varphi(G)$ for some remarkable
classes of finite groups $G$. In several cases the following lemma
(corresponding to a well-known result on the totient function)
will be very useful.

\begin{lemma}
    $\varphi$ is multiplicative, that is
    if $(G_i)_{i=\ov{1,k}}$ is a family of finite groups of coprime
    orders, then we have:
    \[ \varphi(\prod_{i=1}^kG_i)=\prod_{i=1}^k\varphi(G_i). \]
\end{lemma}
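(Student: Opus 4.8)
The plan is to reduce the statement to two standard facts about a direct product $G=\prod_{i=1}^k G_i$ of finite groups of pairwise coprime orders: that the exponent of $G$ is the product of the exponents of the factors, and that the order of a tuple is the product of the orders of its coordinates. Granting these, an element of $G$ has maximal order precisely when each of its coordinates has maximal order, so the set of elements of maximal order in $G$ is the Cartesian product of the corresponding sets in the $G_i$, and the formula follows by counting.

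First I would set $m_i=\exp(G_i)$ and $m=\exp(G)$. Since $\exp$ of a direct product is the lcm of the exponents of the factors, $m=\lcm(m_1,\dots,m_k)$; but each $m_i$ divides $|G_i|$, so the $m_i$ are pairwise coprime, and hence $m=\prod_{i=1}^k m_i$. The same reasoning applies to an arbitrary element $a=(a_1,\dots,a_k)\in G$: one has $o(a)=\lcm(o(a_1),\dots,o(a_k))$, and since $o(a_i)\mid|G_i|$ the orders $o(a_i)$ are pairwise coprime, whence $o(a)=\prod_{i=1}^k o(a_i)$. Now combine these: $o(a)=m$ if and only if $\prod_{i=1}^k o(a_i)=\prod_{i=1}^k m_i$; and because $o(a_i)\mid m_i$ for every $i$ while the $m_i$ are pairwise coprime, a comparison prime-by-prime (unique factorization) forces $o(a_i)=m_i$ for all $i$. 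Therefore
\[
\{a\in G:o(a)=\exp(G)\}=\prod_{i=1}^k\{a_i\in G_i:o(a_i)=\exp(G_i)\},
\]
and taking cardinalities yields $\varphi(\prod_{i=1}^k G_i)=\prod_{i=1}^k\varphi(G_i)$.

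The only real obstacle is the coprimality bookkeeping: one must check carefully that the lcm formulas for the exponent of a product and for the order of a tuple genuinely upgrade to products once the relevant quantities are pairwise coprime, and that $\prod o(a_i)=\prod m_i$ with $o(a_i)\mid m_i$ does force $o(a_i)=m_i$ coordinatewise. None of this is deep — it is essentially the Chinese Remainder phenomenon for divisibility — but it is where the hypothesis of coprime orders is used; the remainder is just the multiplication principle for cardinalities of Cartesian products. (Alternatively, one could invoke the remark already established above, that $\varphi(G)=\varphi(\exp G)\cdot k_G$ with $k_G$ the number of cyclic subgroups of order $\exp(G)$, and then combine the classical multiplicativity of $\varphi$ on $\mathbb{N}$ with the multiplicativity of $G\mapsto k_G$ on coprime direct factors; the direct argument above avoids even this.)
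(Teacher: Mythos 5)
Your proposal is correct and follows essentially the same route as the paper: both reduce to the facts that, for coprime orders, $o(a)=\prod_i o(a_i)$ and $\exp(\prod_i G_i)=\prod_i\exp(G_i)$, conclude that an element has maximal order exactly when every coordinate does, and count via the resulting Cartesian-product bijection. The only difference is that you spell out the coprimality bookkeeping (lcm upgrading to product, and $\prod o(a_i)=\prod m_i$ with $o(a_i)\mid m_i$ forcing coordinatewise equality) that the paper leaves as "one easily obtains."
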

\begin{proof}
Every element $a\hspace{-0,5mm}\in\hspace{-0,5mm}
\prod_{i=1}^kG_i$ can uniquely be written as $a{=}(a_1,a_2,...,
a_k)$, where $a_i \in G_i, i=\ov{1,k}$. Under our hypothesis, we
have
$$o(a)=\prod_{i=1}^ko(a_i) \hspace{1mm}\mbox{ and }\hspace{1mm} \exp(\prod_{i=1}^kG_i)=\prod_{i=1}^k{\rm
exp}(G_i).$$One easily obtains that $o(a)={\rm
exp}(\prod_{i=1}^kG_i)$ if and only if $o(a_i)=\exp(G_i)$, for all
$i=\ov{1,k}$. This shows that there is a bijection between the set
of elements of order ${\rm exp}(\prod_{i=1}^kG_i)$ in
$\prod_{i=1}^kG_i$ and the cartesian product of the sets $\{a_i
\in G_i \mid o(a_i)={\rm exp}(G_i)\}$, $i=1,2, \dots, k$. Hence
the desired equality holds.
\end{proof}

The following theorem shows that the computation of $\varphi(G)$
for a finite nilpotent group $G$ is reduced to $p$-groups.

\begin{theorem}
Let $G$ be a finite nilpotent group and $G_i$,
$i=\ov{1,k}$, be the Sylow subgroups of $G$. Then
$$\varphi(G)=\prod_{i=1}^k\varphi(G_i).$$
\end{theorem}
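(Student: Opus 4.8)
The plan is to reduce the statement directly to Lemma 2.1 by invoking the structure theory of finite nilpotent groups. Recall the standard characterization: a finite group $G$ is nilpotent if and only if it is the internal direct product of its Sylow subgroups; thus, writing $p_1,\dots,p_k$ for the distinct prime divisors of $|G|$ and $G_i$ for the (unique, normal) Sylow $p_i$-subgroup, we have $G\cong\prod_{i=1}^k G_i$. Since $\varphi$ preserves isomorphisms (as noted at the beginning of Section 2), it suffices to evaluate $\varphi$ on this direct product.

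The next and essentially final step is to observe that the orders $|G_i|=p_i^{n_i}$ are pairwise coprime, so the family $(G_i)_{i=\ov{1,k}}$ satisfies the hypothesis of Lemma 2.1. Applying that lemma gives
\[ \varphi\Big(\prod_{i=1}^k G_i\Big)=\prod_{i=1}^k\varphi(G_i), \]
and combining with the isomorphism $G\cong\prod_{i=1}^k G_i$ yields the desired formula.

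There is no serious obstacle here: the combinatorial heart of the argument — that $o(a)=\exp$ of a direct product of coprime-order groups exactly when each coordinate realizes the exponent of its factor — was already carried out in the proof of Lemma 2.1, and the only additional ingredient is the classical decomposition of a finite nilpotent group into its Sylow subgroups, which can simply be cited from \cite{8} or \cite{14}. If one wishes to make the reduction fully transparent, it is worth noting explicitly that $\exp(G)=\prod_{i=1}^k\exp(G_i)$ in this setting, so that computing $\varphi(G)$ is genuinely reduced to the case of $p$-groups, as the theorem advertises.
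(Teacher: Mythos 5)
Your proposal is correct and follows exactly the paper's argument: the paper likewise proves the theorem by noting that a finite nilpotent group is the direct product of its Sylow subgroups, whose orders are pairwise coprime, and then applying Lemma 2.1. The extra remarks you add (isomorphism invariance of $\varphi$, the factorization of the exponent) are harmless elaborations of the same one-line reduction.
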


\begin{proof}
The equality follows immediately from Lemma
2.1, since a finite nilpotent group is the direct product of its
Sylow subgroups.
\end{proof}

If $G$ is a finite $p$-group of order $p^n$, then we have $${\rm
exp}(G)={\rm max}\{o(a) \mid a \in G\}=p^m, \mbox{ where } 0 \leq
m \leq n,$$and $\varphi(G) \geq \varphi(p^m)$ (notice that this
can be an equality, as for the dihedral groups $D_{2^n}$,
$n\geq3$, the generalized quaternion groups $Q_{2^n}$, $n\geq4$,
or the quasi-dihedral groups $S_{2^n}$, $n\geq4$ -- see Theorem
4.1 of \cite{14}, II). We also infer that, in this case, $\varphi(G)\neq 0$.
Unfortunately, an explicit formula for $\varphi(G)$ cannot be
obtained in the general case.
\bigskip

A particular class of finite $p$-groups for which we are able to
compute explicitly $\varphi(G)$ is that of abelian $p$-groups.

\begin{theorem}
Let $G$ be a finite abelian $p$-group of
type $(p^{\alpha_1}, p^{\alpha_2}, \dots, p^{\alpha_r})$ and assume
that $\alpha_1 \leq \alpha_2 \leq \dots \leq
\alpha_{s-1}<\alpha_s=\alpha_{s+1}= \dots =\alpha_r$. Then
$$\varphi(G)=\mid G \mid(1-\frac{1}{p^{\hspace{0,5mm}r-s+1}})\,.$$
\end{theorem}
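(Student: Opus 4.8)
The plan is to realise $G$ concretely as $\mathbb{Z}_{p^{\alpha_1}}\times\cdots\times\mathbb{Z}_{p^{\alpha_r}}$ and to count the elements of maximal order directly. Since $\alpha_1\le\cdots\le\alpha_r$, we have $\exp(G)=p^{\alpha_r}$, and for an element $a=(a_1,\dots,a_r)$ the order is $o(a)=\lcm\{o(a_i)\mid i=\ov{1,r}\}$. Hence $o(a)=p^{\alpha_r}$ if and only if $o(a_i)=p^{\alpha_r}$ for at least one index $i$; and since $o(a_i)\le p^{\alpha_i}\le p^{\alpha_r}$, equality on the right forces $\alpha_i=\alpha_r$, so any such index must lie in $\{s,s+1,\dots,r\}$. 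This reduces everything to understanding the last $r-s+1$ coordinates.

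It is more convenient to count the complement. An element $a$ fails to have order $\exp(G)$ exactly when $o(a_i)<p^{\alpha_r}$ for every $i\in\{s,\dots,r\}$, that is, when each of the last $r-s+1$ components lies in the unique subgroup of index $p$ of $\mathbb{Z}_{p^{\alpha_r}}$, while the first $s-1$ components are arbitrary. The first $s-1$ coordinates contribute $\prod_{i=1}^{s-1}p^{\alpha_i}$ choices, and each of the last $r-s+1$ coordinates contributes $p^{\alpha_r-1}$ choices, so the number of such "bad" elements is $\bigl(\prod_{i=1}^{s-1}p^{\alpha_i}\bigr)\,p^{(r-s+1)(\alpha_r-1)}$.

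Finally I would compare this with $\mid G\mid=\bigl(\prod_{i=1}^{s-1}p^{\alpha_i}\bigr)\,p^{(r-s+1)\alpha_r}$: the count of bad elements is exactly $\mid G\mid/p^{\,r-s+1}$, and therefore
$$\varphi(G)=\mid G\mid-\frac{\mid G\mid}{p^{\,r-s+1}}=\mid G\mid\Bigl(1-\frac{1}{p^{\,r-s+1}}\Bigr),$$
which is the asserted formula. I do not expect any genuine obstacle here; the only point deserving a line of care is the reduction step, namely that $o(a)=\lcm_i o(a_i)$ and that attaining the value $p^{\alpha_r}$ really requires one of the "long" coordinates (those with $\alpha_i=\alpha_r$) to generate its cyclic factor. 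Once that is in place, the rest is a routine count of $p$-powers, and the hypothesis $\alpha_{s-1}<\alpha_s$ is used only to identify precisely which coordinates are the long ones.
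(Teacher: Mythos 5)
Your argument is correct, and it is genuinely different from the one in the paper. You count directly in the concrete realization $\mathbb{Z}_{p^{\alpha_1}}\times\cdots\times\mathbb{Z}_{p^{\alpha_r}}$: an element fails to have order $\exp(G)=p^{\alpha_r}$ exactly when each of the $r-s+1$ coordinates with $\alpha_i=\alpha_r$ lies in the unique index-$p$ subgroup of its factor, which gives $\bigl(\prod_{i=1}^{s-1}p^{\alpha_i}\bigr)p^{(r-s+1)(\alpha_r-1)}=\mid G\mid/p^{\,r-s+1}$ bad elements, and the formula follows by complementation. The paper instead quotes Corollary 4.4 of its reference \cite{15} (a general counting result for abelian $p$-groups, expressed through functions $h_p^{r-1}$) with $\alpha=\alpha_r$, and then simplifies; notably its second displayed line, $p^{\alpha_1+\cdots+\alpha_r}-p^{\alpha_1+\cdots+\alpha_{s-1}+(r-s+1)(\alpha_r-1)}$, is exactly the ``total minus bad'' count you obtain by hand. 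So your proof buys self-containedness and transparency (the lattice of $p$-power orders in each cyclic factor does all the work, and the hypothesis $\alpha_{s-1}<\alpha_s$ enters only to identify the long coordinates), while the paper's citation buys access to a machinery that also counts elements of any prescribed order $p^{\alpha}$, not just the maximal one. The only steps worth writing out explicitly in your version are the identity $o(a)=\max_i o(a_i)$ (the lcm of $p$-powers) and the observation that a coordinate $a_i$ can have order $p^{\alpha_r}$ only when $\alpha_i=\alpha_r$; both are immediate.
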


\begin{proof}
We have $\exp(G)=p^{\alpha_r}$ and therefore $\varphi(G)$ is equal
to the number of elements of order $p^{\alpha_r}$ in $G$. This
number can be easily found by using Corollary 4.4 of \cite{15} for
$\alpha=\alpha_r$. Under the notation of \cite{15}, one obtains
$$\hspace{-20mm}\varphi(G)=p^{\alpha_r}h_p^{r-1}(\alpha_r)-p^{\alpha_r-1}h_p^{r-1}(\alpha_r-1)$$
$$\hspace{7,5mm}=p^{\alpha_1+\alpha_2+\dots+\alpha_r}-p^{\alpha_1+\alpha_2+\dots+\alpha_{s-1}+(r-s+1)(\alpha_r-1)}$$
$$\hspace{-23mm}=p^{\alpha_1+\alpha_2+\dots+\alpha_r}(1-\frac{1}{p^{\hspace{0,5mm}r-s+1}})$$
$$\hspace{-35,5mm}=\,\mid G \mid(1-\frac{1}{p^{\hspace{0,5mm}r-s+1}}),$$completing the proof.
\end{proof}

Obviously, Lemma 2.1 allows us to extend the above result to
arbitrary finite abelian groups.

\begin{cor}
Let $G=\dd\prod_{i=1}^k G_i$ be a finite abelian group, where
$G_i$ is of type $(p_i^{\alpha_{i1}}, p_i^{\alpha_{i2}}, \dots,
p_i^{\alpha_{ir_{i}}})$, and assume that $\alpha_{i1} \leq
\alpha_{i2} \leq \dots \leq
\alpha_{is_{i}-1}<\alpha_{is_{i}}=\alpha_{is_i+1}= \dots
=\alpha_{ir_i}$, $i=\ov{1,k}$. Then
$$\varphi(G)=\prod_{i=1}^k\varphi(G_i)=\prod_{i=1}^k \mid G_i \mid(1-\frac{1}{p_i^{\hspace{0,5mm}r_i-s_i+1}})$$
$$\hspace{-2mm}=\varphi(\mid G \mid)\prod_{i=1}^k\frac{p_i^{\hspace{0,5mm}r_i-s_i+1}-1}{p_i^{\hspace{0,5mm}r_i-s_i+1}-p_i^{\hspace{0,5mm}r_i-s_i}}\,.$$
\end{cor}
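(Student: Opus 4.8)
The plan is to obtain the three displayed equalities in turn, each from a result already available. For the first equality, note that the components $G_1, \dots, G_k$ are the primary parts of $G$ attached to the distinct primes $p_1, \dots, p_k$, so the orders $|G_1|, \dots, |G_k|$ are pairwise coprime; Lemma 2.1 then gives $\varphi(G) = \varphi\bigl(\prod_{i=1}^k G_i\bigr) = \prod_{i=1}^k \varphi(G_i)$ immediately. For the second equality, each $G_i$ is a finite abelian $p_i$-group of type $(p_i^{\alpha_{i1}}, \dots, p_i^{\alpha_{ir_i}})$ whose exponents satisfy exactly the chain hypothesis of Theorem 2.4 with the index $s_i$; applying that theorem to each $G_i$ and multiplying yields $\prod_{i=1}^k \varphi(G_i) = \prod_{i=1}^k |G_i|\bigl(1 - 1/p_i^{\,r_i-s_i+1}\bigr)$.

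It remains to massage this product into the asserted closed form involving $\varphi(|G|)$. Here I would use that the classical totient is multiplicative and that $|G_i| = p_i^{\alpha_{i1} + \cdots + \alpha_{ir_i}}$ is a prime power, so $\varphi(|G|) = \prod_{i=1}^k \varphi(|G_i|) = \prod_{i=1}^k |G_i|(1 - 1/p_i)$. Dividing the $i$-th factor of the product above by the $i$-th factor of this expression for $\varphi(|G|)$ and clearing denominators by $p_i^{\,r_i-s_i+1}$ gives
$$\frac{|G_i|\bigl(1 - 1/p_i^{\,r_i-s_i+1}\bigr)}{|G_i|(1 - 1/p_i)} = \frac{1 - p_i^{-(r_i-s_i+1)}}{1 - p_i^{-1}} = \frac{p_i^{\,r_i-s_i+1} - 1}{p_i^{\,r_i-s_i+1} - p_i^{\,r_i-s_i}},$$
so that $\prod_{i=1}^k |G_i|\bigl(1 - 1/p_i^{\,r_i-s_i+1}\bigr) = \varphi(|G|)\prod_{i=1}^k \dfrac{p_i^{\,r_i-s_i+1} - 1}{p_i^{\,r_i-s_i+1} - p_i^{\,r_i-s_i}}$, which is the third equality.

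I do not expect any genuine obstacle: the statement is essentially a repackaging of Lemma 2.1 and Theorem 2.4, and the last step is a one-line arithmetic identity. The only points requiring a modicum of care are verifying that the chain conditions on the $\alpha_{ij}$ are precisely the hypotheses of Theorem 2.4 for each $G_i$ (so that the formula from that theorem may be quoted verbatim), that one may assume each $G_i$ is nontrivial so that $\varphi(|G_i|) = |G_i|(1 - 1/p_i)$, and that $G = \prod_{i=1}^k G_i$ really is the coprime-order decomposition needed to invoke multiplicativity of both the generalized and the classical $\varphi$.
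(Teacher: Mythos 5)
Your argument is correct and matches the paper's intended route: the paper gives no separate proof beyond remarking that Lemma 2.1 (multiplicativity over the coprime primary components) combined with the abelian $p$-group formula (Theorem 2.3, which you cite as ``Theorem 2.4'' --- a harmless numbering slip) yields the result, and your final arithmetic identity relating the product to $\varphi(\lvert G\rvert)$ is exactly the intended computation.
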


An important class of finite (nilpotent) groups whose structure is
strongly connected to abelian groups is that of finite
hamiltonian groups. Such a group $G$ is the direct product of a
quaternion group of order 8, an elementary abelian 2-group and a
finite abelian group $A$ of odd order. The value $\varphi(G)$ can
also be calculated, according to our above results.

\begin{cor}
Let $G=Q_8 \times \mathbb{Z}_2^n \times A$ be a finite hamiltonian group.
Then $$\varphi(G)=3\cdot2^{n+1}\varphi(A),$$where $\varphi(A)$ is
given by Corollary {\rm 2.4}.
\end{cor}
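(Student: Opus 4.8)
The plan is to combine the multiplicativity lemma (Lemma~2.1) with Theorem~2.3 and the known value of $\varphi(Q_8)$. First I would observe that $G = Q_8 \times \mathbb{Z}_2^n \times A$, and since $A$ has odd order while $Q_8 \times \mathbb{Z}_2^n$ is a $2$-group, the two factors $Q_8 \times \mathbb{Z}_2^n$ and $A$ have coprime orders. Hence Lemma~2.1 gives $\varphi(G) = \varphi(Q_8 \times \mathbb{Z}_2^n)\,\varphi(A)$, and it remains only to compute $\varphi(Q_8 \times \mathbb{Z}_2^n)$.

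Next I would compute $\varphi(Q_8 \times \mathbb{Z}_2^n)$ directly. The exponent of $Q_8$ is $4$ and the exponent of $\mathbb{Z}_2^n$ is $2$ (or $1$ if $n=0$), so $\exp(Q_8 \times \mathbb{Z}_2^n) = 4$. An element $(x,y)$ has order $4$ precisely when $o(x) = 4$, i.e.\ $x$ is one of the six elements of $Q_8$ of order $4$ (namely $\pm i, \pm j, \pm k$), while $y \in \mathbb{Z}_2^n$ is arbitrary. This yields $\varphi(Q_8 \times \mathbb{Z}_2^n) = 6 \cdot 2^n = 3 \cdot 2^{n+1}$. Alternatively, one notes $\varphi(Q_8) = 6$ directly and $\varphi(\mathbb{Z}_2^n) = 2^n - 1$, but one cannot simply multiply these because $Q_8$ and $\mathbb{Z}_2^n$ do not have coprime orders and indeed $\varphi$ is not multiplicative here — so the count of order-$4$ elements must genuinely be carried out by hand as above. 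Putting the pieces together, $\varphi(G) = 3 \cdot 2^{n+1} \varphi(A)$.

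The only subtlety — and the step most likely to trip up a careless reader — is precisely the failure of multiplicativity for the non-coprime pair $Q_8$ and $\mathbb{Z}_2^n$: one must resist the temptation to write $\varphi(Q_8)\varphi(\mathbb{Z}_2^n) = 6(2^n-1)$, which is wrong, and instead recount the maximal-order elements in the product directly. Everything else is a routine application of Lemma~2.1 and the structure theorem for hamiltonian groups quoted just before the statement. I would also remark that the same answer follows from Theorem~2.3 applied to the $2$-part if one wished to phrase the $2$-group computation in terms of cyclic-subgroup counts, but the direct element count is cleaner.
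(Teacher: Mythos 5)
Your proposal is correct and follows essentially the same route as the paper: apply Lemma~2.1 to the coprime factors $Q_8\times\mathbb{Z}_2^n$ and $A$, then count directly that an element $(a,b)$ of $Q_8\times\mathbb{Z}_2^n$ has order $4=\exp(Q_8\times\mathbb{Z}_2^n)$ exactly when $o(a)=4$, giving $6\cdot 2^n=3\cdot 2^{n+1}$. Your added warning about the non-coprime pair $Q_8$, $\mathbb{Z}_2^n$ is a sensible observation but does not change the argument.
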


\begin{proof}
By Lemma 2.1, we get
$$\varphi(G)=\varphi(Q_8 \times \mathbb{Z}_2^n)\varphi(A).$$On the
other hand, it is easy to see that $\exp(Q_8 \times
\mathbb{Z}_2^n)=\exp(Q_8)=4$. An element $(a,b)\in Q_8 \times
\mathbb{Z}_2^n$ has order 4 if and only if $o(a)=4$ in $Q_8$.
Since $Q_8$ possesses six elements of order 4, it results that
$\varphi(Q_8 \times \mathbb{Z}_2^n)=6\cdot2^n=3\cdot2^{n+1}$,
which leads to the desired formula.
\end{proof}

The computation of $\varphi(G)$ can also be made for several
classes of finite groups that are not necessarily nilpotent. Two
simple examples of such groups are the finite dihedral groups
$D_{2n}$, $n \geq 2$, and the finite nonabelian $P$-groups (recall
that, given an integer $n\geq2$ and two primes $p>2, q$ such that
$q \mid p-1$, a nonabelian $P$-group $G$ of order $p^{n-1}q$ is a
semidirect product of a normal subgroup $A\cong\mathbb{Z}_p^{n-1}$
by a cyclic subgroup of order $q$ which induces a nontrivial power
automorphism on $A$; moreover, it is well-known that $G$ is
lattice-isomorphic to $\mathbb{Z}_p^n$ -- see Theorem 2.2.3 of
\cite{12}).

\begin{theorem}
The following equalities hold:
\begin{itemize}
\item[\rm a)]
        $\varphi(D_{2n})=\left\{\begin{array}{lll}
        0,&n \equiv 1 \hspace{1mm}({\rm mod}\hspace{1mm} 2)\\
        &&\mbox{, for all }n\geq 3.\\
        \varphi(n),&n \equiv 0 \hspace{1mm}({\rm mod}\hspace{1mm} 2)\end{array}\right.$
\item[\rm b)]
        $\varphi(G)=0$, where $G$ is the nonabelian $P$-group of order $p^{n-1}q$ {\rm(}$p>2,q$ primes, $q \mid p-1${\rm)} that is $L$-isomorphic with $\mathbb{Z}_p^n$.
\end{itemize}
\end{theorem}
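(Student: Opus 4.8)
The plan is to analyze each family by first pinning down the exponent and then counting the elements that attain it. For part (a), the dihedral group $D_{2n}$ with $n\geq 3$ has the cyclic subgroup $\langle r\rangle\cong\mathbb{Z}_n$ of rotations, together with $n$ reflections, each of order $2$. I would split into two cases according to the parity of $n$. When $n$ is odd, $n\geq 3$ forces $n>2$, so $\exp(D_{2n})=\operatorname{lcm}(n,2)=2n>n$; but no element of $D_{2n}$ has order $2n$ (rotations have order dividing $n$, reflections have order $2$), so $\varphi(D_{2n})=0$. When $n$ is even, $\operatorname{lcm}(n,2)=n$, so $\exp(D_{2n})=n$, and the elements of order $n$ are exactly the generators of $\langle r\rangle$ (again reflections only contribute order $2\leq n$, with $2<n$ once $n\geq 4$, and the case $n=2$ is excluded), giving $\varphi(D_{2n})=\varphi(\langle r\rangle)=\varphi(n)$ by the cyclic case noted in Section 2.

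For part (b), let $G$ be the nonabelian $P$-group of order $p^{n-1}q$, a semidirect product $A\rtimes C$ with $A\cong\mathbb{Z}_p^{n-1}$, $C$ cyclic of order $q$, and $C$ acting on $A$ by a nontrivial power automorphism. The key structural fact I would use is that $G$ is lattice-isomorphic to $\mathbb{Z}_p^n$ (Theorem 2.2.3 of \cite{12}); in particular every cyclic subgroup of $G$ has order $1$, $p$, or $q$, since $\mathbb{Z}_p^n$ has only subgroups of $p$-power order but the $L$-isomorphism need only preserve the subgroup lattice, so I should instead argue directly: every nontrivial element of $A$ has order $p$, and every element outside $A$ generates a subgroup whose image in $G/A\cong\mathbb{Z}_q$ is nontrivial, hence has order divisible by $q$; combined with the $L$-isomorphism to $\mathbb{Z}_p^n$ — whose cyclic subgroups all have order $1$ or $p$ — every cyclic subgroup of $G$ has order $1$, $p$, or $q$. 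Therefore every element of $G$ has order $1$, $p$, or $q$, so $\exp(G)=pq$, while no element has order $pq$. Hence $\varphi(G)=0$.

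The main obstacle is making the order-$q$ claim for elements outside $A$ rigorous without circularity: one must be sure that an element $g=ac$ with $c$ of order $q$ acting nontrivially genuinely has order $q$ and not $pq$ or larger. I would handle this by the power-automorphism hypothesis: the action of $C$ on $A$ is by a fixed-point-free power automorphism $x\mapsto x^t$ with $t\not\equiv 1\pmod p$ but $t^q\equiv 1\pmod p$, so a direct computation of $(ac)^q=a^{1+t+\cdots+t^{q-1}}c^q=a^{(t^q-1)/(t-1)}$ and the observation that $\sum_{i=0}^{q-1}t^i\equiv 0\pmod p$ (because $t-1$ is invertible mod $p$ while $t^q-1\equiv 0$) shows $(ac)^q=1$. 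Alternatively, and more cleanly, I would simply cite the $L$-isomorphism: since $\mathbb{Z}_p^n$ has exponent $p$, the order of the cyclic subgroup generated by any element of $G$ is a prime (either $p$ or $q$), as the lattice of a cyclic group of order $m$ has $\tau(m)$ elements and a chain structure matching a cyclic subgroup of $\mathbb{Z}_p^n$ forces $m$ prime. Once every nontrivial element has prime order $p$ or $q$, and both orders occur, $\exp(G)=pq$ is not attained by any element, and the conclusion follows.
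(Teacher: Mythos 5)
Your proposal is correct and takes essentially the same route as the paper: for (a) you determine $\exp(D_{2n})=\lcm(n,2)$ from the rotation/reflection structure and note that the exponent is attained only when $n$ is even, where the elements of order $n$ are exactly the $\varphi(n)$ generators of $\langle r\rangle$; for (b) you show every element of the nonabelian $P$-group has order $p$ or $q$, so $\exp(G)=pq$ is not attained. The only difference is that you supply the $(ac)^q=a^{(t^q-1)/(t-1)}=1$ computation that the paper simply asserts, which is the right way to justify it, whereas your ``cleaner'' alternative resting on the $L$-isomorphism alone is the shakier of your two options (lattice isomorphisms do not preserve element orders, as you yourself observe) and is not needed.
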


\begin{proof}
a) The dihedral group $D_{2n}= \langle x,y\mid x^n=y^2=1,\
yxy=x^{-1} \rangle$, $n\ge2$, has a unique cyclic (normal)
subgroup of order $n$, namely $\langle x \rangle$, and all
elements in $D_{2n}\setminus\langle x \rangle$ are of order 2. We
infer that
$$\exp(D_{2n})=\left\{\begin{array}{lll}
        2n,&n \equiv 1 \hspace{1mm}({\rm mod}\hspace{1mm} 2)\\
        &&\\
        n,&n \equiv 0 \hspace{1mm}({\rm mod}\hspace{1mm}
        2)\end{array}\right.$$and the desired expression for
        $\varphi(D_{2n})$ follows immediately.

b) Since $G$ contains only elements of orders $p$ or $q$, we have
$\exp(G)=pq$ and hence $\varphi(G)=0$.
    \qedhere
\end{proof}

From Theorem 2.3 we obtain $\varphi(\mathbb{Z}_p^n)=p^n-1$. On the
other hand, by b) of Theorem 2.6, for the finite nonabelian
$P$-group $G$ of order $p^{n-1}q$ ($p>2,q$ primes, $q \mid p-1$)
we have $\varphi(G)=0$, even though $G$ is $L$-isomorphic to
$\mathbb{Z}_p^n$. This remark leads to the following result.

\begin{cor}
$\varphi$ does not preserve $L$-isomorphisms.
\end{cor}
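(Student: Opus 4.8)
The plan is to exhibit a single pair of finite groups that are lattice-isomorphic but on which $\varphi$ takes different values; this immediately shows that $\varphi$ is not an invariant of the subgroup lattice. The natural choice is already implicit in the preceding paragraph: take $H=\mathbb{Z}_p^n$ and $G$ the nonabelian $P$-group of order $p^{n-1}q$ with $p>2$, $q$ primes and $q\mid p-1$, for some fixed $n\ge 2$ (e.g. $n=2$ with $p=3$, $q=2$, so that $G\cong S_3$). By the structure theory of $P$-groups recalled before Theorem 2.6 (Theorem 2.2.3 of \cite{12}), $G$ is $L$-isomorphic to $\mathbb{Z}_p^n$, so such a pair genuinely exists.

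First I would record that $\exp(\mathbb{Z}_p^n)=p$, so every nonidentity element has order equal to the exponent, giving $\varphi(\mathbb{Z}_p^n)=p^n-1$ by Theorem 2.3 (with $r=n$, $s=1$); in particular this value is nonzero. Next I would invoke part b) of Theorem 2.6, which states precisely that $\varphi(G)=0$ for the nonabelian $P$-group $G$ of order $p^{n-1}q$ that is $L$-isomorphic with $\mathbb{Z}_p^n$; the reason, already given there, is that $G$ contains only elements of order $p$ or $q$, hence $\exp(G)=pq$ and no element attains the exponent. Therefore $\varphi(\mathbb{Z}_p^n)=p^n-1\neq 0=\varphi(G)$, while $G$ and $\mathbb{Z}_p^n$ have isomorphic subgroup lattices. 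This contradicts the assertion that $\varphi$ depends only on the lattice of subgroups, and the corollary follows.

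There is essentially no obstacle here: the corollary is a direct consequence of Theorem 2.3 and Theorem 2.6(b), both of which are available, together with the cited fact that the two groups in question are lattice-isomorphic. The only thing to be careful about is to phrase the argument as a counterexample rather than an attempted general statement, and to make clear that ``$L$-isomorphic'' means having isomorphic subgroup lattices, so that the failure of $\varphi$ to be determined by this data is exactly what ``does not preserve $L$-isomorphisms'' asserts. I would keep the proof to two or three sentences, simply citing the relevant earlier results.

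\begin{proof}
Fix primes $p>2$ and $q$ with $q\mid p-1$, and an integer $n\ge 2$; let $G$ be the nonabelian $P$-group of order $p^{n-1}q$ that is $L$-isomorphic with $\mathbb{Z}_p^n$ (see Theorem 2.2.3 of \cite{12}). On the one hand, $\exp(\mathbb{Z}_p^n)=p$, so every nonidentity element of $\mathbb{Z}_p^n$ has order equal to the exponent, and Theorem 2.3 gives $\varphi(\mathbb{Z}_p^n)=p^n-1\neq 0$. On the other hand, by b) of Theorem 2.6 we have $\varphi(G)=0$. Since $G$ and $\mathbb{Z}_p^n$ have isomorphic subgroup lattices but $\varphi(G)\neq\varphi(\mathbb{Z}_p^n)$, the function $\varphi$ does not preserve $L$-isomorphisms.
\end{proof}
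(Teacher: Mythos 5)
Your proof is correct and follows exactly the paper's route: the remark preceding the corollary derives it from $\varphi(\mathbb{Z}_p^n)=p^n-1$ (Theorem 2.3) versus $\varphi(G)=0$ for the $L$-isomorphic nonabelian $P$-group (Theorem 2.6 b)). Nothing is missing; your write-up just makes the paper's implicit argument explicit.
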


The most general finite groups are the symmetric groups $S_n$, $n
\in \mathbb{N}^*$. Obviously, the values $\varphi(S_n)$ can be
easily computed for the first positive integers $n$ (e.g.
$\varphi(S_1)=\varphi(S_2)=1$, $\varphi(S_3)=\varphi(S_4)=0$,
\dots, and so on) and the same thing can be also said about the
values $\varphi(A_n)$, where $A_n$ is the alternating group on $n$
letters (e.g. $\varphi(A_2)=1$, $\varphi(A_3)=2$,
$\varphi(A_4)=\varphi(A_5)=0$, \dots, and so on). For an arbitrary
$n$ the above values are given by the following theorem.

\begin{theorem}
\begin{enumerate}
    \item[\rm a)]
    For all $n\geq3$, we have $\varphi(S_n)=0$.
    \item[\rm b)]
    For all $n\geq4$, we have $\varphi(A_n)=0$.
\end{enumerate}
\end{theorem}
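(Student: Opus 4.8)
The plan is to use the identity $\varphi(G)=|\{a\in G:o(a)=\exp(G)\}|$ together with the elementary fact that $\exp(S_n)=\lcm(1,2,\dots,n)$ --- true because each $k$-cycle has order $k$, while the order of any $\sigma\in S_n$ is the lcm of its cycle lengths, whose sum is $\le n$, and therefore divides $\lcm(1,\dots,n)$. For a), fix $n\ge3$; let $p$ be the largest prime with $p\le n$ and let $2^{a}$ be the largest power of $2$ with $2^{a}\le n$ (so $a\ge1$). Then $p$ is odd (indeed $p\ge3$), $2^{a}>n/2$ since $2^{a+1}>n$, and $p>n/2$ by Bertrand's postulate, whence also $2p>n$. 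If some $\sigma\in S_n$ had order $\exp(S_n)$, then $p\mid o(\sigma)$ and $2^{a}\mid o(\sigma)$, so $\sigma$ would have a cycle of length divisible by $p$ and a cycle of length divisible by $2^{a}$; since $2p>n$ and $2^{a+1}>n$, these lengths equal $p$ and $2^{a}$ exactly, and as $p\ne2^{a}$ the two cycles are distinct and move disjoint sets of points. Together they would move $p+2^{a}>\tfrac n2+\tfrac n2=n$ points, which is impossible; hence $\varphi(S_n)=0$.

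For b) the first task is to locate $\exp(A_n)$ for $n\ge4$. For every odd prime power $q^{b}\le n$ the $q^{b}$-cycles are even, so $q^{b}\mid\exp(A_n)$; a product of two disjoint $2^{a-1}$-cycles (possible since $2^{a}\le n$) is an even permutation of order $2^{a-1}$, so $2^{a-1}\mid\exp(A_n)$; and if $n\ge2^{a}+2$ then a $2^{a}$-cycle times a transposition is an even permutation of order $2^{a}$. Since also $\exp(A_n)\mid\exp(S_n)$, these remarks force $\exp(A_n)\in\{\exp(S_n),\tfrac12\exp(S_n)\}$, with $\exp(A_n)=\exp(S_n)$ precisely when $n\notin\{2^{a},2^{a}+1\}$ (a $2^{a}$-cycle, being odd, needs to be accompanied by a further even-length cycle in order to lie in $A_n$, which is impossible when $2^{a}+2>n$, i.e.\ when $n\in\{2^{a},2^{a}+1\}$). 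In the exceptional case $n=2^{a}$ or $n=2^{a}+1$ one has $a\ge2$ because $n\ge4$, and the exact power of $2$ dividing $\exp(A_n)$ is $2^{a-1}$.

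If $\exp(A_n)=\exp(S_n)$, any $\sigma\in A_n$ of that order is in particular an element of $S_n$ of order $\exp(S_n)$, which by a) does not exist, so $\varphi(A_n)=0$. Suppose now $n=2^{a}$ or $n=2^{a}+1$, and let $\sigma\in A_n$ have order $m=\exp(A_n)$. Let $p$ be the largest prime $\le n$ (odd, $p>n/2$, $2p>n$). Then $p\mid m$ and $2^{a-1}\mid m$, so $\sigma$ has a cycle of length exactly $p$ and a cycle of length $L$ with $2^{a-1}\mid L$, hence $L\ge2^{a-1}$; a single cycle divisible by both $p$ and $2^{a-1}$ would have length $\ge p\cdot2^{a-1}>n$, so these are two disjoint cycles and $\sigma$ moves at least $p+L\ge p+2^{a-1}$ points. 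Since $p>n/2$ forces $p\ge2^{a-1}+1$ in both sub-cases, we get $p+2^{a-1}\ge n$: for $n=2^{a}$ this already gives $p+2^{a-1}>n$, a contradiction, while for $n=2^{a}+1$ the chain $n\ge p+L\ge p+2^{a-1}\ge n$ collapses to equalities, so $L=2^{a-1}$ and $\sigma$ is just a $p$-cycle times a $2^{a-1}$-cycle, whence $o(\sigma)=p\cdot2^{a-1}$. But then $3\nmid o(\sigma)$ (here $p\ge5$, since $5\le n$), whereas $3\mid m$ because a $3$-cycle lies in $A_n$ ($3\le n$) --- a contradiction. Hence $\varphi(A_n)=0$ for all $n\ge4$.

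I expect the delicate point to be the sub-case $n=2^{a}+1$ of b): there the two forced cycles can a priori exactly exhaust $\{1,\dots,n\}$, so point-counting by itself does not conclude, and one must observe that the remaining candidate permutation would then have order coprime to $3$ while $\exp(A_n)$ is divisible by $3$. The only other step requiring care is the precise determination of $\exp(A_n)$ --- in particular that its $2$-part drops exactly for $n\in\{2^{a},2^{a}+1\}$; the rest is elementary cycle-length bookkeeping together with Bertrand's postulate.
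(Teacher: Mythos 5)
Your proposal is correct, and while it shares the paper's overall skeleton --- compute $\exp(S_n)=\lcm(1,\dots,n)$, determine $\exp(A_n)$ (your description of when the $2$-part drops, namely exactly for $n\in\{2^{a},2^{a}+1\}$, coincides with the paper's formula), and then show that the cycles needed to realize the required prime-power divisors of the exponent cannot fit into $n$ points --- the quantitative step is genuinely different. The paper bounds the sum of \emph{all} the maximal prime powers $p_i^{\alpha_i}\leq n$ from below by $n\sum 1/p_i$ and uses $1/2+1/3+1/5>1$ (respectively $1/4+1/3+1/5+1/7+1/11>1$ for $A_n$) to reduce to finitely many values ($n\leq 4$, resp. $n\leq 10$), which are then checked by hand. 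You instead invoke Bertrand's postulate to extract just two prime powers, the largest prime $p\leq n$ and the largest $2$-power, each exceeding $n/2$, so their disjoint cycles already overfill $\{1,\dots,n\}$; this makes part a) uniform in $n\geq 3$ with no case analysis, and in part b) the only residual difficulty is the boundary case $n=2^{a}+1$, where $p+2^{a-1}$ may equal $n$ and you correctly close the gap with the observation that the resulting permutation would have order $p\cdot 2^{a-1}$ prime to $3$, while $3\mid\exp(A_n)$. The trade-off is clear: the paper's argument is elementary and self-contained but ends in finite verification; yours imports Bertrand's postulate (standard, but a deeper input) and in exchange eliminates all case checking and isolates precisely which configuration could conceivably survive. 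All the individual steps you use (exactness of the cycle lengths $p$ and $2^{a}$ because $2p>n$ and $2^{a+1}>n$, disjointness, the parity analysis forcing $2^{a}\mid\exp(A_n)$ iff $n\geq 2^{a}+2$) check out.
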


\begin{proof}
a) Since the order of a permutation in $S_n$ is the least common
multiple of the lengths of the cycles in its cycle decomposition,
we get
\begin{equation}\label{eq:expSn}
    \exp(S_n)= \lcm(1,2, \dots, n) = \prod_{p_i \leq n} p_i^{\alpha_i} \,,
\end{equation}
where the product runs over all primes less than $n$, and for each such $p_i \leq n$, the exponent
$\alpha_i$ is the largest number such that $p_i^{\alpha_i} \leq n$, i.e.\@
\begin{equation}\label{eq:piai}
    n / p_i < p_i^{\alpha_i} \leq n \,.
\end{equation}
An element $g \in S_n$ has order equal to $\exp(S_n)$ if and only if it has a cycle decomposition
into non-trivial cycles of lengths $n_1,\dots,n_k$ with $n_1 + \dots + n_k \leq n$ and $\lcm(n_1,\dots,n_k) = \exp(S_n)$.
Since every factor $p_i^{\alpha_i}$ appearing in~\eqref{eq:expSn} has to occur in at least one
element $n_j$, this implies that
\[ \sum_{p_i \leq n} p_i^{\alpha_i} \leq n \,. \]
Together with~\eqref{eq:piai}, this gives
\[ n \sum_{p_i \leq n} (1/p_i) < \sum_{p_i \leq n} p_i^{\alpha_i} \leq n \,. \]
Since $1/2 + 1/3 + 1/5 > 1$, this yields $n \leq 4$. If $n=4$,
then $\exp(S_n) = 12$, but $S_4$ has no element of order $12$. If
$n=3$, then $\exp(S_n) = 6$, but $S_3$ has no element of order $6$
either.

b) In order to compute $\exp(A_n)$, we note that each cycle of odd
length in $S_n$ is contained in $A_n$, and hence the odd parts of
$\exp(A_n)$ and $\exp(S_n)$ coincide. For the $2$-part, observe
that every cycle of $S_n$ of even length less than $n-2$ can be
extended to an element of $A_n$ by multiplying it by a
transposition disjoint from this cycle, and hence the $2$-part of
$\exp(A_n)$ coincides with the $2$-part of $\exp(S_{n-2})$. We
deduce that
\begin{equation}\label{eq:expAn}
    \exp(A_n) = \begin{cases}
        \exp(S_n) / 2, & \  \hspace{1mm}n = 2^\ell \text{ or } n = 2^\ell + 1 \text{ for some } \ell  \\
        \exp(S_n), & \ \text{ otherwise}.
    \end{cases}
\end{equation}
In particular, we can write
\[ \exp(A_n) = \prod_{p_i \leq n} p_i^{\beta_i} \,, \]
with
\[ n / 4 < 2^{\beta_1} \leq n \quad \text{and} \quad n / p_i < p_i^{\beta_i} \leq n, \text{ for all } p_i \geq 3 \,. \]
If there is an element $g \in S_n$ with order equal to
$\exp(A_n)$, then we infer as before that
\[ n / 4 \hspace{1mm}+ \sum_{3 \leq p_i \leq n} (n/p_i) < \sum_{p_i \leq n} p_i^{\beta_i} \leq n \,. \]
Since $1/4 + 1/3 + 1/5 + 1/7 + 1/11 > 1$, this yields $n \leq 10$.
It is now straightforward to check that for $4 \leq n \leq 10$, the group $A_n$ does not contain elements of order $\exp(A_n)$.
\qedhere
\end{proof}

\section{Connections of $\varphi(G)$ with $\varphi(\mid\hspace{-1mm} G\hspace{-1mm}\mid)$ and $\mid\hspace{-1mm}{\rm Aut}(G)\hspace{-1mm}\mid$}

As follows from our previous results, for some classes of finite
groups $G$ the value $\varphi(G)$ depends on the value of the
classical totient function computed for $\mid G \mid$\,. In this
way, the following tasks are natural:
\begin{itemize}
\item[{\rm a)}] given a finite group $G$, compare $\varphi(G)$ with $\varphi(\mid\hspace{-1mm} G\hspace{-1mm}\mid)$;
\item[{\rm b)}] determine the finite groups $G$ satisfying $\varphi(G)=\varphi(\mid\hspace{-1mm} G \hspace{-1mm}\mid)$.
\end{itemize}

Related to {\rm a)} we are able to indicate three simple examples,
which show that for every relation $R\in
\{<,\hspace{1mm}=,\hspace{1mm}>\}$ there exist finite non-abelian
groups $G$ with $\varphi(G)\hspace{1mm} R
\hspace{1mm}\varphi(\mid\hspace{-1mm} G \hspace{-1mm}\mid)$:
\begin{description}
\item[\hspace{20mm}${\rm a}_1)$] $\varphi(D_8)=2<4=\varphi(\mid\hspace{-1mm} D_8\hspace{-1mm}\mid)$;
\item[\hspace{20mm}${\rm a}_2)$] $\varphi(\mathbb{Z}_3\times S_3)=6=\varphi(\mid\hspace{-1mm} \mathbb{Z}_3\times S_3\hspace{-1mm}\mid)$;
\item[\hspace{20mm}${\rm a}_3)$] $\varphi(Q_8)=6>4=\varphi(\mid\hspace{-1mm}Q_8\hspace{-1mm}\mid)$.
\end{description}

More can be said in the case of abelian groups, for which
Corollary 2.4 easily leads to the following theorem.

\begin{theorem}
Let $G=\dd\prod_{i=1}^k G_i$ be a finite abelian group, where
$G_i$ is of type $(p_i^{\alpha_{i1}}, p_i^{\alpha_{i2}}, \dots,
p_i^{\alpha_{ir_{i}}})$, $i=\ov{1,k}$. Then
$$\varphi(G)\geq\varphi(\mid\hspace{-1mm}G\hspace{-1mm}\mid),$$and
we have equality if and only if $\alpha_{ir_i-1}<\alpha_{ir_i}$,
for all $i=\ov{1,k}$, that is if and only if $G$ has a unique
cyclic subgroup of order $\exp(G)$.
\end{theorem}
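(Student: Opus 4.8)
The plan is to reduce the comparison $\varphi(G)$ versus $\varphi(\mid G\mid)$ to the explicit product formula already obtained in Corollary 2.4. First I would recall that, by Corollary 2.4,
\[
\varphi(G)=\varphi(\mid G\mid)\prod_{i=1}^k\frac{p_i^{\,r_i-s_i+1}-1}{p_i^{\,r_i-s_i+1}-p_i^{\,r_i-s_i}}\,,
\]
so the whole statement is equivalent to showing that each factor
\[
\frac{p_i^{\,r_i-s_i+1}-1}{p_i^{\,r_i-s_i+1}-p_i^{\,r_i-s_i}}
\]
is at least $1$, with equality in the product precisely when every such factor equals $1$. Write $d_i=r_i-s_i\ge 0$; then the $i$-th factor is $\dfrac{p_i^{d_i+1}-1}{p_i^{d_i+1}-p_i^{d_i}}=\dfrac{p_i^{d_i+1}-1}{p_i^{d_i}(p_i-1)}$.

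The core estimate is elementary: I would show $p_i^{d_i+1}-1\ge p_i^{d_i}(p_i-1)=p_i^{d_i+1}-p_i^{d_i}$, i.e.\ $p_i^{d_i}\ge 1$, which always holds; hence every factor is $\ge 1$ and therefore $\varphi(G)\ge\varphi(\mid G\mid)$. For the equality case, since all factors are $\ge 1$, the product equals $1$ if and only if each factor equals $1$, and the $i$-th factor equals $1$ exactly when $p_i^{d_i}=1$, that is $d_i=0$, i.e.\ $r_i=s_i$. Unwinding the definition of $s_i$ from the hypothesis $\alpha_{i1}\le\cdots\le\alpha_{is_i-1}<\alpha_{is_i}=\cdots=\alpha_{ir_i}$, the condition $s_i=r_i$ says exactly that $\alpha_{ir_i-1}<\alpha_{ir_i}$ (with the convention that the inequality is vacuously satisfied when $r_i=1$), which is the stated criterion.

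Finally I would translate the numerical condition into the structural one. Having $\alpha_{ir_i-1}<\alpha_{ir_i}$ for the $p_i$-component means that the largest elementary divisor $p_i^{\alpha_{ir_i}}$ occurs with multiplicity one, so the Sylow $p_i$-subgroup of $G$ has a unique subgroup of order $p_i^{\alpha_{ir_i}}=\exp(\text{Sylow }p_i)$; more precisely, the number of cyclic subgroups of maximal order in an abelian $p$-group of type $(p^{\alpha_1},\dots,p^{\alpha_r})$ with $\alpha_{r-1}<\alpha_r$ is $1$ (this can be read off from the count of generators, $\varphi(p^{\alpha_r})$ elements each, versus $\varphi(G_i)=\mid G_i\mid(1-1/p_i)$ from Theorem 2.3 with $s_i=r_i$). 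Since a cyclic subgroup of order $\exp(G)$ in $G=\prod_i G_i$ corresponds to a choice of one cyclic subgroup of order $\exp(G_i)$ in each $G_i$, $G$ has a unique cyclic subgroup of order $\exp(G)$ if and only if each $G_i$ does, completing the equivalence. The only mildly delicate point is handling the boundary cases $r_i=1$ and $s_i=1$ consistently with the indexing convention for $\alpha_{ir_i-1}$; everything else is the one-line inequality $p_i^{d_i}\ge1$ and a bookkeeping argument about products.
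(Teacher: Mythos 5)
Your reduction to Corollary 2.4 is exactly the route the paper intends (the paper offers no written proof, saying only that Corollary 2.4 ``easily leads'' to the theorem), and your treatment of the inequality and of the numerical equality criterion is correct: with $d_i=r_i-s_i\ge 0$, each factor $\frac{p_i^{d_i+1}-1}{p_i^{d_i+1}-p_i^{d_i}}$ is $\ge 1$ because $p_i^{d_i}\ge 1$, and the product equals $1$ exactly when $d_i=0$ for all $i$, i.e. $\alpha_{i r_i-1}<\alpha_{i r_i}$ for all $i$. Up to that point your argument coincides with the paper's.

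The final paragraph, however, contains a genuine error. From $\alpha_{i,r_i-1}<\alpha_{i r_i}$ it does \emph{not} follow that $G_i$ (or $G$) has a unique cyclic subgroup of maximal order: the count you appeal to is $\varphi(G_i)/\varphi\bigl(p_i^{\alpha_{i r_i}}\bigr)=p_i^{(\alpha_{i1}+\dots+\alpha_{i r_i})-\alpha_{i r_i}}$, which equals $1$ only when $G_i$ is cyclic. Concretely, $G=\mathbb{Z}_p\times\mathbb{Z}_{p^2}$ has $\alpha_1=1<\alpha_2=2$ and indeed $\varphi(G)=p^3-p^2=\varphi(\mid\hspace{-1mm}G\hspace{-1mm}\mid)$, yet it contains $p$ (not one) cyclic subgroups of order $\exp(G)=p^2$. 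So the assertion ``the number of cyclic subgroups of maximal order \dots is $1$'' is false, and no bookkeeping can repair it, because the last clause of the statement itself is inaccurate: by Theorem 3.2, the equality $\varphi(G)=\varphi(\mid\hspace{-1mm}G\hspace{-1mm}\mid)$ is equivalent to the number of cyclic subgroups of order $\exp(G)$ being $\mid\hspace{-1mm}G\hspace{-1mm}\mid/\exp(G)$, which is $1$ only for cyclic $G$. The correct structural reformulation of $r_i=s_i$ for all $i$ is that the largest exponent $\alpha_{i r_i}$ occurs with multiplicity one for each prime $p_i$ (equivalently, $G$ has exactly $\mid\hspace{-1mm}G\hspace{-1mm}\mid/\exp(G)$ cyclic subgroups of order $\exp(G)$); your proof of the inequality and of the equality criterion in terms of the $\alpha$'s stands, but the ``unique cyclic subgroup'' translation should be dropped or corrected rather than argued for.
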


For an arbitrary finite group $G$ a necessary and sufficient
condition to have $\varphi(G)=\varphi(\mid\hspace{-1mm} G
\hspace{-1mm}\mid)$ is indicated in the following theorem.

\begin{theorem}
For a finite group $G$ we have
$\varphi(G)=\varphi(\mid\hspace{-1mm} G \hspace{-1mm}\mid)$ if and
only if the number of cyclic subgroups of order $\exp(G)$ in $G$
is $\dd\frac{\mid\hspace{-1mm} G \hspace{-1mm}\mid}{{\rm
exp}(G)}$\hspace{0,5mm}.
\end{theorem}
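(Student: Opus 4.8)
The plan is to reduce the asserted equivalence to a single arithmetic identity, namely $\varphi(|G|)=\frac{|G|}{\exp(G)}\,\varphi(\exp(G))$, which holds for \emph{every} finite group $G$, and then to compare it with the formula for $\varphi(G)$ recorded at the beginning of Section~2. Recall that formula: writing $m=\exp(G)$, every element of order $m$ in $G$ generates a unique cyclic subgroup of order $m$, and conversely each cyclic subgroup of order $m$ contains exactly $\varphi(m)$ elements of order $m$ (its generators); hence $\varphi(G)=\varphi(m)\,k$, where $k$ is the number of cyclic subgroups of order $m$ in $G$.

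The main step is the claim that $|G|$ and $m=\exp(G)$ have exactly the same prime divisors. Indeed, $\exp(G)\mid |G|$, so every prime dividing $\exp(G)$ divides $|G|$; conversely, if a prime $p$ divides $|G|$, then by Cauchy's theorem $G$ has an element of order $p$, whence $p\mid\exp(G)$. Using the product expression $\varphi(N)=N\prod_{p\mid N}(1-\frac{1}{p})$, this yields
$$\varphi(|G|)=|G|\prod_{p\mid |G|}\Bigl(1-\frac{1}{p}\Bigr)=\frac{|G|}{m}\cdot m\prod_{p\mid m}\Bigl(1-\frac{1}{p}\Bigr)=\frac{|G|}{m}\,\varphi(m).$$

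Finally, combining the two displays and dividing through by $\varphi(m)\ge 1$, the equality $\varphi(G)=\varphi(|G|)$ holds if and only if $\varphi(m)\,k=\frac{|G|}{m}\,\varphi(m)$, that is, if and only if $k=\frac{|G|}{m}=\frac{|G|}{\exp(G)}$, which is exactly the stated condition. I do not anticipate a genuine obstacle: apart from the appeal to Cauchy's theorem to match the prime divisors of $|G|$ with those of $\exp(G)$, everything is routine bookkeeping with the multiplicativity of the classical totient.
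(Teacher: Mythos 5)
Your proof is correct and follows essentially the same route as the paper: both start from $\varphi(G)=\varphi(m)k$ (with $m=\exp(G)$ and $k$ the number of cyclic subgroups of order $m$) and reduce the claim to the arithmetic identity $\varphi(\mid\hspace{-1mm} G\hspace{-1mm}\mid)=\frac{\mid G\mid}{m}\,\varphi(m)$. The only difference is presentational: the paper reaches this via the identity $\varphi(mm')=\frac{\gcd(m,m')}{\varphi(\gcd(m,m'))}\varphi(m)\varphi(m')$ and leaves the simplification as ``a simple arithmetical exercise,'' whereas you derive it directly from the product formula together with Cauchy's theorem --- which is precisely the fact (that $\mid\hspace{-1mm} G\hspace{-1mm}\mid$ and $\exp(G)$ share the same prime divisors) needed to justify that exercise, so your version makes explicit a step the paper glosses over.
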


\begin{proof}
Let $n=\hspace{1mm}\mid\hspace{-1mm} G \hspace{-1mm}\mid$, $m={\rm
exp}(G)$ and denote by $k$ the number of cyclic subgroups of order
$m$ in $G$. Then $\varphi(G)=\varphi(m)k$. Since $m \mid n$, we
have $n=mm'$ for some positive integer $m'$. It follows that
$$\varphi(n)=\varphi(mm')=\frac{\gcd(m,m')}{\varphi(
\gcd(m,m'))}\hspace{0,5mm}\varphi(m)\varphi(m'),$$which leads to
$$\varphi(G)=\varphi(\mid\hspace{-1mm} G \hspace{-1mm}\mid) \Longleftrightarrow
\varphi(m)k=\varphi(n) \Longleftrightarrow k=\frac{
\gcd(m,m')}{\varphi( \gcd(m,m'))}\hspace{0,5mm}\varphi(m').$$Now,
a simple arithmetical exercise shows that the last equality above
is equivalent to $k=m'$. Hence
$\varphi(G)=\varphi(\mid\hspace{-1mm} G \hspace{-1mm}\mid)$ if and
only if $k=\dd\frac{\mid G \mid}{\exp(G)}\,.$
\end{proof}

Mention that we were unable to give a precise description of the
finite groups $G$ satisfying the condition in Theorem 3.2.

\bigskip\noindent{\bf Remark.} As we already have seen, there exist large
classes of finite $p$-groups $G$ such that
$\varphi(G)=\varphi(\exp(G))$. Consequently, another natural
problem (similar to b), by replacing $\lvert G \rvert$ with
$\exp(G)$) is to cha\-rac\-te\-ri\-ze the finite groups $G$
satisfying this condition. Under the notation of Theorem 3.2, we
have $\varphi(G)=\varphi(\exp(G))$ if and only if $k=1$, that is
$G$ possesses a unique cyclic subgroup of order $\exp(G)$. In this
case, an interesting remark is given by Theorem 1.1 of \cite{3}:
the group $G$ must be supersolvable. Observe also that for a
finite abelian group $G$ we have
$\varphi(G)=\varphi(\exp(G))\Longleftrightarrow
\varphi(G)=\varphi(\mid\hspace{-1mm} G \hspace{-1mm}\mid)$.
\smallskip

Next we recall an alternative way to define the classical Euler's
totient function, namely
$$\varphi(n)=\hspace{1mm}\mid\hspace{-1mm}
{\rm Aut}(\mathbb{Z}_n)\hspace{-1mm}\mid, \mbox{ for all } n\in
\mathbb{N}^*.$$This leads to the natural idea of comparing the
values $\varphi(G)$ and $\hspace{1mm}\mid\hspace{-1mm} {\rm
Aut}(G)\hspace{-1mm}\mid$ for arbitrary finite groups $G$. First
of all, we observe that for a non-trivial finite group $G$ with
$Z(G)=1$ we have
$$\varphi(G)<\hspace{1mm}\mid\hspace{-1mm} G\hspace{-1mm}\mid\hspace{1mm}=\hspace{1mm}\mid\hspace{-1mm} {\rm
Inn}(G)\hspace{-1mm}\mid\hspace{1mm}\leq\hspace{1mm}\mid\hspace{-1mm}
{\rm Aut}(G)\hspace{-1mm}\mid.$$This inequality is also valid for
many non-cyclic groups of small order, as well as for several
important classes of finite non-abelian groups (e.g. dihedral
groups, hamiltonian groups or finite groups $G$ with
$\varphi(G)=0$). Almost the same thing can be said in the case of
finite abelian groups $G$, for which the study is reduced to
$p$-groups. By using Theorem 2.4 and the explicit formula for
$\mid\hspace{-1mm} {\rm Aut}(G)\hspace{-1mm}\mid$ given by Theorem
4.1 of \cite{7}, we easily obtain the following result.

\begin{theorem}
Let $G$ be a finite abelian group. Then
$\varphi(G)\leq\hspace{1mm}\mid\hspace{-1mm} {\rm
Aut}(G)\hspace{-1mm}\mid$, and we have equality if and only if $G$
is cyclic.
\end{theorem}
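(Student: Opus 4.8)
The plan is to reduce to finite abelian $p$-groups and then exhibit $\varphi(G)$ as the length of a transitive $\mathrm{Aut}(G)$-orbit. Write a finite abelian $G$ as the direct product $G=\prod_{i=1}^{k}G_i$ of its Sylow subgroups. Since these have coprime orders, Lemma 2.1 gives $\varphi(G)=\prod_{i=1}^{k}\varphi(G_i)$; since each $G_i$ is characteristic in $G$, we also have $\mathrm{Aut}(G)\cong\prod_{i=1}^{k}\mathrm{Aut}(G_i)$. Moreover $G$ is cyclic if and only if every $G_i$ is, and a product inequality $\prod a_i\le\prod b_i$ arising from coordinatewise inequalities $a_i\le b_i$ (with all $b_i\ge 1$) is an equality iff it holds in each coordinate. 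So it is enough to prove the theorem when $G$ is a finite abelian $p$-group.

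Assume henceforth that $G$ is a finite abelian $p$-group and put $X=\{a\in G:o(a)=\exp(G)\}$, so that $\varphi(G)=\lvert X\rvert$ and $\mathrm{Aut}(G)$ acts on $X$. First I would show this action is transitive. If $a,b\in X$, then $\langle a\rangle$ and $\langle b\rangle$ are cyclic subgroups of order $\exp(G)$, hence each is a direct summand of $G$; writing $G=\langle a\rangle\oplus H=\langle b\rangle\oplus K$, the uniqueness part of the structure theorem for finite abelian groups forces $H$ and $K$ to have the same type, so there is an isomorphism $\theta\colon H\to K$, and then $\gamma\oplus\theta$ (with $\gamma\colon\langle a\rangle\to\langle b\rangle$, $a\mapsto b$) is an automorphism of $G$ carrying $a$ to $b$. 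By the orbit-stabilizer theorem, $\varphi(G)=\lvert X\rvert=[\mathrm{Aut}(G):\mathrm{Stab}(a)]$ divides $\lvert\mathrm{Aut}(G)\rvert$; in particular $\varphi(G)\le\lvert\mathrm{Aut}(G)\rvert$, with equality iff $\mathrm{Stab}(a)=1$.

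Next I would analyze $\mathrm{Stab}(a)$ for $a$ of maximal order, with $G=\langle a\rangle\oplus H$. If $G$ is cyclic, then $H=0$, $a$ is a generator, every automorphism fixing $a$ is the identity, so $\mathrm{Stab}(a)=1$ and equality holds (consistently with $\varphi(\mathbb{Z}_{p^m})=\varphi(p^m)=\lvert\mathrm{Aut}(\mathbb{Z}_{p^m})\rvert$). If $G$ is not cyclic, then $H\neq 0$; for every homomorphism $f\colon H\to\langle a\rangle$ the map $\psi_f\colon c+x\mapsto c+f(x)+x$ (for $c\in\langle a\rangle$, $x\in H$) is a well-defined endomorphism of $G$, it is injective and hence, by finiteness, an automorphism, and it fixes $a$; since $f\mapsto\psi_f$ is injective, $\lvert\mathrm{Stab}(a)\rvert\ge\lvert\mathrm{Hom}(H,\langle a\rangle)\rvert$. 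But $H$ is a nonzero abelian $p$-group, so $H/pH\neq 0$ and $H$ surjects onto $\mathbb{Z}_p$, which embeds in $\langle a\rangle$; hence $\mathrm{Hom}(H,\langle a\rangle)\neq 0$, so $\mathrm{Stab}(a)\neq 1$ and $\varphi(G)<\lvert\mathrm{Aut}(G)\rvert$. Combined with the reduction step, this proves the theorem. (Alternatively, in the spirit of the remark preceding the statement, one may instead compare the explicit formula of Theorem 2.4 with the formula for $\lvert\mathrm{Aut}(G)\rvert$ in Theorem 4.1 of \cite{7} directly; this is correct but computationally heavier.)

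The step I expect to be the main obstacle is the structural input behind transitivity: that a cyclic subgroup of order $\exp(G)$ in a finite abelian $p$-group is a direct summand, and that its complement is unique up to isomorphism. Both are standard facts (the first amounts to the injectivity of $\mathbb{Z}/p^m\mathbb{Z}$ as a module over itself; the second is Krull--Schmidt for finite abelian groups), but they should be invoked carefully rather than taken for granted.
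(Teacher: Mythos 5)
Your argument is correct, but it takes a genuinely different route from the paper. The paper's proof (only sketched there) is computational: it reduces to abelian $p$-groups and compares the explicit value $\varphi(G)=\lvert G\rvert\bigl(1-\frac{1}{p^{\,r-s+1}}\bigr)$ from Theorem 2.3/Corollary 2.4 with the Hillar--Rhea formula for $\lvert{\rm Aut}(G)\rvert$ of a finite abelian $p$-group (Theorem 4.1 of \cite{7}) -- exactly the ``computationally heavier'' alternative you mention in passing. You instead argue structurally: after the same coprime reduction (Lemma 2.1 together with ${\rm Aut}(\prod G_i)\cong\prod{\rm Aut}(G_i)$ for the characteristic Sylow subgroups), you observe that ${\rm Aut}(G)$ acts transitively on the set $X$ of elements of order $\exp(G)$ in an abelian $p$-group -- using that such an element generates a direct summand and that complements are isomorphic by Krull--Schmidt/cancellation -- so orbit--stabilizer gives $\varphi(G)=[{\rm Aut}(G):{\rm Stab}(a)]$, and then you exhibit the nontrivial stabilizer elements $\psi_f$ coming from $0\neq f\in{\rm Hom}(H,\langle a\rangle)$ when $G=\langle a\rangle\oplus H$ is not cyclic. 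All the steps check out (well-definedness, injectivity and the fixing of $a$ by $\psi_f$, positivity of the factors in the reduction to force coordinatewise equality). What your approach buys: it is self-contained, needs no formula for $\lvert{\rm Aut}(G)\rvert$ from \cite{7}, and in fact proves the stronger statement that $\varphi(G)$ \emph{divides} $\lvert{\rm Aut}(G)\rvert$ for every finite abelian $G$; what the paper's approach buys is brevity given that both explicit formulas are already available in the surrounding text.
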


Inspired by the above results, we came up with the following
conjecture.

\begin{conjecture}
Let $G$ be a finite group. Then
$\varphi(G)\leq\hspace{1mm}\mid\hspace{-1mm} {\rm
Aut}(G)\hspace{-1mm}\mid$\,, and we have equality if and only if
$G$ is cyclic.
\end{conjecture}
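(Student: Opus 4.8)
The plan is to attack the conjecture $\varphi(G)\le\,\mid\!{\rm Aut}(G)\!\mid$ with equality iff $G$ is cyclic. The starting observation is that $\varphi(G)=\varphi(m)k$ where $m=\exp(G)$ and $k$ is the number of cyclic subgroups of order $m$ in $G$, so we must show $\varphi(m)k\le\,\mid\!{\rm Aut}(G)\!\mid$. Fix any element $a$ of order $m$, and let $C=\langle a\rangle$. The key structural idea is to build enough automorphisms of $G$ that move $a$ around, while keeping the trivial cases ($m\le 2$, handled as in Section 2 where $Im(\varphi)$ meets the odds only at $2^n-1$) separate. Concretely, I would try the following. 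If $G$ is abelian, Theorem 3.3 already gives the result, so assume $G$ is non-abelian. If $Z(G)=1$, the displayed inequality $\varphi(G)<\mid\!G\!\mid=\mid\!{\rm Inn}(G)\!\mid\le\,\mid\!{\rm Aut}(G)\!\mid$ from the paragraph before Theorem 3.3 settles it (and strictly, so no equality). The remaining case is $G$ non-abelian with $Z(G)\ne 1$.

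For that case I would split according to whether $m=\exp(G)$ exceeds $\exp(Z(G))$ or not. If every element of maximal order lies outside $Z(G)$, pick such an $a$; then conjugation already gives $\mid\!G:C_G(a)\!\mid$ distinct images of $a$, and combining inner automorphisms with the action on $C=\langle a\rangle$ (an automorphism of $G$ restricting to $C$ as any of the $\varphi(m)$ generator-maps would be ideal, but need not exist) one hopes to cover all $\varphi(m)k$ elements of order $m$ by an injection into ${\rm Aut}(G)$. A cleaner route: the set $X=\{x\in G: o(x)=m\}$ has size $\varphi(G)$, and ${\rm Aut}(G)$ acts on $X$; if this action were transitive we would get $\varphi(G)=\mid\!{\rm Aut}(G)\!\mid/\mid\!{\rm Aut}(G)_a\!\mid\le\,\mid\!{\rm Aut}(G)\!\mid$ immediately, but transitivity fails in general (e.g. $\mathbb{Z}_2^n$). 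So instead I would bound $\mid\!{\rm Aut}(G)_a\!\mid$, the stabilizer of a fixed $a\in X$, from below: every automorphism fixing $a$ pointwise on $C$ is determined by its action on a transversal, and one shows $\mid\!{\rm Aut}(G)_a\!\mid\ge \varphi(m)k/(\text{orbit count})$ by a counting/averaging argument over the ${\rm Aut}(G)$-orbits on $X$, i.e. $\mid\!{\rm Aut}(G)\!\mid=\sum_{\text{orbits }O}\mid\!{\rm Aut}(G)_{a_O}\!\mid\cdot 1$ rearranged against $\varphi(G)=\sum_O\mid\!O\!\mid$.

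A more promising concrete tactic is to produce, for each generator $\gamma$ of $C\cong\mathbb{Z}_m$ and each cyclic subgroup $C'$ of order $m$, at least one automorphism of $G$ carrying $a$ to the corresponding generator of $C'$ — that would give an injection $\{\text{generators of order-}m\text{ cyclic subgroups}\}\hookrightarrow{\rm Aut}(G)$ after fixing a section, hence $\varphi(m)k\le\,\mid\!{\rm Aut}(G)\!\mid$. Power automorphisms handle the $\varphi(m)$ factor when $C\trianglelefteq G$ acts on all of $G$ compatibly; for the $k$ factor one uses that $G$ permutes its cyclic subgroups of maximal order by conjugation, but this only reaches the conjugacy classes of such subgroups, so the genuinely hard part is subgroups of order $m$ in distinct ${\rm Aut}(G)$-orbits. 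I expect the main obstacle to be exactly this: ruling out a non-cyclic $G$ with $Z(G)\ne 1$ having many ${\rm Aut}(G)$-orbits of maximal cyclic subgroups yet a small automorphism group — this is presumably why the statement is posed as a conjecture rather than a theorem, and a full proof would likely need either the classification of finite simple groups via the non-abelian composition factors, or a delicate induction on $\mid\!G\!\mid$ using $Z(G)$ and $G/Z(G)$ together with the known bounds on $\mid\!{\rm Aut}(G)\!\mid$ relative to $\mid\!G\!\mid$ for groups with small centre.
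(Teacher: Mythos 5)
There is a fundamental problem: the statement you are trying to prove is false, and the paper itself disproves it. Conjecture 3.4 is posed only as a conjecture, and in the remarks and examples that follow (Example 3.2 in particular) the author constructs a counterexample: take $G=\mathbb{Z}_m\times M_{11}$ with $m=\exp(M_{11})$. By the results of Bidwell--Curran--McCaughan and Bray--Wilson cited there, $\mid\hspace{-1mm}{\rm Aut}(\mathbb{Z}_m\times M_{11})\hspace{-1mm}\mid\,=\varphi(m)\mid\hspace{-1mm}M_{11}\hspace{-1mm}\mid$, while the set of elements of order $m=\exp(\mathbb{Z}_m\times M_{11})$ contains not only all pairs $(a,b)$ with $o(a)=m$ and $b$ arbitrary (already $\varphi(m)\mid\hspace{-1mm}M_{11}\hspace{-1mm}\mid$ of them) but also pairs with $o(a)=m/5$, $o(b)=5$, and $o(a)=m/11$, $o(b)=11$. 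Hence $\varphi(G)>\,\mid\hspace{-1mm}{\rm Aut}(G)\hspace{-1mm}\mid$, so no proof of the conjectured inequality can exist.

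It is worth noting that the counterexample lives exactly in the region you flag at the end as ``the genuinely hard part'': a non-abelian group with $Z(G)\neq 1$ (here $Z(G)\cong\mathbb{Z}_m$), with $\mid\hspace{-1mm}G\hspace{-1mm}\mid\,\geq\exp(G)^2$, no cyclic Sylow subgroup, and an automorphism group that is small because the simple factor $M_{11}$ is complete-like (all its automorphisms are inner) and admits few homomorphisms into the cyclic factor. Your reductions for the abelian case (Theorem 3.3) and the centerless case (the inequality $\varphi(G)<\mid\hspace{-1mm}G\hspace{-1mm}\mid\,=\mid\hspace{-1mm}{\rm Inn}(G)\hspace{-1mm}\mid\,\leq\,\mid\hspace{-1mm}{\rm Aut}(G)\hspace{-1mm}\mid$) are fine as far as they go, and they match the paper's partial positive results (the conjecture does hold for nilpotent groups, for groups with $\mid\hspace{-1mm}G\hspace{-1mm}\mid\,<\exp(G)^2$, and for groups with a cyclic Sylow subgroup). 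But the remaining case cannot be closed by any counting of ${\rm Aut}(G)$-orbits on elements of maximal order, because mixed direct products of a cyclic group with a simple group of non-squarefree exponent genuinely violate the inequality. The correct resolution of this statement is a disproof, not a proof.
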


We were sure that the above conjecture is true for a long time,
but finally we disproved it. Several interesting remarks are
presented in the following.

\bigskip\noindent{\bf Remarks.}
1. A well-known conjecture in the theory of finite groups is the
so-called "LA-conjecture", which asserts that for a finite
non-cyclic $p$-group $G$ of order greater than $p^2$,
$\mid\hspace{-1mm}G\hspace{-1mm}\mid$ divides $\mid\hspace{-1mm}
{\rm Aut}(G)\hspace{-1mm}\mid$\,. It follows that
$$\varphi(G)<\hspace{1mm}\mid\hspace{-1mm}
G\hspace{-1mm}\mid\hspace{1mm}\leq\hspace{1mm}\mid\hspace{-1mm}
{\rm Aut}(G)\hspace{-1mm}\mid.$$Since the inequality
$\varphi(G)\leq\hspace{1mm}\mid\hspace{-1mm} {\rm
Aut}(G)\hspace{-1mm}\mid$ also holds for groups of order $p$ or
$p^2$, we infer that it is true for arbitrary $p$-groups.
Obviously, this remark can be extended to nilpotent groups. In
other words, \textit{Conjecture {\rm 3.4} is true for all finite
nilpotent groups}.
\smallskip

2. Assume the finite group $G$ of order
$n=p_1^{\alpha_1}p_2^{\alpha_2}\cdots p_k^{\alpha_k}$ and exponent
$m=p_1^{\beta_1}p_2^{\beta_2}\cdots p_k^{\beta_k}$ to be a
counterexample for Conjecture 3.4, that is
$\varphi(G)>\hspace{1mm}\mid\hspace{-1mm} {\rm
Aut}(G)\hspace{-1mm}\mid$\,. Then the map $$f:M(G)=\{a\in G\mid
o(a)=m\}\longrightarrow {\rm Inn}(G)$$
$$\hspace{19mm}a\mapsto f_a, \mbox{ the inner automorphism induced by }a,$$is not
one-to-one and therefore there are $a, b\in M(G)$ such that
$c=ab\in Z(G)$. Since $ab=ba$, we infer that $G$ contains a
subgroup isomorphic to $\mathbb{Z}_m\times\mathbb{Z}_m$, namely
$\langle a, b \rangle$, and that $Z(G)$ contains a cyclic subgroup
of order $m$, namely $\langle c \rangle$. Consequently:
\begin{description}
\item[\hspace{2mm}{\rm (i)}] $n\geq m^2$.
\item[\hspace{1mm}{\rm (ii)}] Every Sylow $p_i$-subgroup $S_i$ of $G$ has a subgroup $H_i\cong\mathbb{Z}_{p_i^{\beta_i}}\times\mathbb{Z}_{p_i^{\beta_i}}$. In particular, $S_i$ is not cyclic and $\alpha_i\geq 2\beta_i\geq 2$.
\item[{\rm (iii)}] $Z(S_i)$ has a cyclic subgroup of order $p_i^{\beta_i}={\rm exp}(S_i)$, for all $i=\ov{1,k}$.
\end{description}
Thus, under the above notation, we deduce that \textit{Conjecture
{\rm 3.4} is true for all finite groups G satisfying}
$\mid\hspace{-1mm}G\hspace{-1mm}\mid\hspace{1mm}<\exp(G)^2$
\textit{or possessing a cyclic Sylow subgroup}.
\smallskip

3. In order to give a counterexample for Conjecture 3.4, we must
look at the finite groups with "few" automorphisms. Examples of
such groups can be constructed by taking direct products of type
$G=G_1\times G_2$, where $G_1$ is a cyclic group and $G_2$ is a
group with trivial center (usually, a simple group -- see the
technique developed in \cite{2}). Then the number of automorphisms
of $G$ can be easily computed, according to the main theorem of
\cite{1}:
$$\mid\hspace{-1mm} {\rm Aut}(G)\hspace{-1mm}\mid\hspace{1mm}=\hspace{1mm}\mid\hspace{-1mm} {\rm Aut}(G_1)\hspace{-1mm}\mid\mid\hspace{-1mm} {\rm Aut}(G_2)\hspace{-1mm}\mid\mid\hspace{-1mm} {\rm Hom}(G_2,
G_1)\hspace{-1mm}\mid.$$Two significant examples are the
following. \bigskip

{\bf\hspace{5mm} Example 3.1.} Put $G_1=\mathbb{Z}_6$ and
$G_2=S_3$. One obtains
$$\mid\hspace{-1mm} {\rm Aut}(\mathbb{Z}_6\times S_3)\hspace{-1mm}\mid\hspace{1mm}=\hspace{1mm}\mid\hspace{-1mm} {\rm Aut}(\mathbb{Z}_6)\hspace{-1mm}\mid\mid\hspace{-1mm} {\rm Aut}(S_3)\hspace{-1mm}\mid\mid\hspace{-1mm} {\rm Hom}(S_3,
\mathbb{Z}_6)\hspace{-1mm}\mid\hspace{1mm}=\hspace{1mm}2\cdot6\cdot2=24.$$On
the other hand, we have
$$\varphi(\mathbb{Z}_6\times S_3)=20$$and therefore even though $\mathbb{Z}_6\times S_3$
sa\-tis\-fies all above conditions (i)-(iii), it fails to give a
co\-un\-ter\-exam\-ple for Conjecture 3.4. Observe also that
$\mathbb{Z}_6\times S_3$ constitutes an example of a finite group
whose order is greater than the order of its automorphism group.
\bigskip

{\bf\hspace{5mm} Example 3.2.} Put $G_1=\mathbb{Z}_m$ and
$G_2=M_{11}$, where $m={\rm exp}(M_{11})$. Then $330\mid m$, since
$\mid\hspace{-1mm}M_{11}\hspace{-1mm}\mid\hspace{1mm}=2^4\cdot3^2\cdot5\cdot11$.
By the remark on page 382 of \cite{2}, one obtains
$$\mid\hspace{-1mm} {\rm Aut}(\mathbb{Z}_m\times M_{11})\hspace{-1mm}\mid\hspace{1mm}=\hspace{1mm}\mid\hspace{-1mm} {\rm
Aut}(\mathbb{Z}_m)\hspace{-1mm}\mid\mid\hspace{-1mm}M_{11}\hspace{-1mm}\mid\hspace{1mm}=\hspace{1mm}\varphi(m)\mid\hspace{-1mm}M_{11}\hspace{-1mm}\mid.$$On
the other hand, it is clear that ${\rm exp}(\mathbb{Z}_m\times
M_{11})=m$ and that the number of elements of order $m$ in
$\mathbb{Z}_m\times M_{11}$ is greater than
$\varphi(m)\mid\hspace{-1mm}M_{11}\hspace{-1mm}\mid$ (the set
$M(\mathbb{Z}_m\times M_{11})$ contains all elements $(a,b)$ with
$o(a)=m$ and $b$ arbitrary, as well as all elements $(a,b)$ with
$o(a)=m/5$ and $o(b)=5$, or $o(a)=m/11$ and $o(b)=11$. These show
that
$$\varphi(\mathbb{Z}_m\times M_{11})>\hspace{1mm}\mid\hspace{-1mm} {\rm
Aut}(\mathbb{Z}_m\times M_{11})\hspace{-1mm}\mid$$and hence
\textit{Conjecture {\rm 3.4} is not true for all finite groups}.
\bigskip

We end this section by mentioning that we were not able to answer
the following question: \textit{Are the finite cyclic groups the
unique groups G satisfying}
$\varphi(G)=\hspace{1mm}\mid\hspace{-1mm} {\rm
Aut}(G)\hspace{-1mm}\mid$\,?

\section{Finite groups $G$ for which $\varphi(G)\neq 0$}

In this section we will denote by $\calc$ the class of finite groups
$G$ satisfying $\varphi(G)\neq 0$. An immediate characterization
of $\calc$ is given by the following theorem.

\begin{theorem}
A finite group $G$ of order $n$ belongs to $\calc$ if and only if
its set of element orders $\pi_e(G)$ forms a sublattice of the
lattice of all divisors of $n$.
\end{theorem}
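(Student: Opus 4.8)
The plan is to reduce the lattice-theoretic condition to the single statement $\exp(G)\in\pi_e(G)$, which by the very definition of $\varphi$ is exactly the assertion $G\in\calc$. First I would record two elementary facts valid for every finite group $G$ of order $n$, writing $L(n)$ for the lattice of divisors of $n$ (ordered by divisibility, with meet $\gcd$ and join $\lcm$). (a) The set $\pi_e(G)$ is a down-set of $L(n)$: if $o(a)=d$ and $e\mid d$, then $a^{d/e}$ has order $e$, so $e\in\pi_e(G)$. In particular, since $\gcd(d,e)\mid d$, the set $\pi_e(G)$ is automatically closed under meets in $L(n)$, so the only substantive part of ``$\pi_e(G)$ is a sublattice of $L(n)$'' is closure under the join operation $\lcm$. (b) One has $\exp(G)=\lcm\pi_e(G)$, and $\pi_e(G)$ is a finite, non-empty subset of $L(n)$ (it contains $1$).

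Next I would prove the two implications. For ``only if'', assume $G\in\calc$ and pick $a\in G$ with $o(a)=\exp(G)=:m$. The cyclic subgroup $\langle a\rangle$ of order $m$ shows $\pi_e(G)\supseteq\{d:d\mid m\}$, whereas the fact that every element order of $G$ divides $\exp(G)=m$ gives the reverse inclusion; hence $\pi_e(G)$ is precisely the set of all divisors of $m$, and this is a sublattice of $L(n)$ because meets and joins of divisors of $m$ are again divisors of $m$. For ``if'', assume $\pi_e(G)$ is a sublattice of $L(n)$. Being closed under binary joins and finite, it is closed under the join of its entire underlying set, which by (b) equals $\exp(G)$; thus $\exp(G)\in\pi_e(G)$, i.e.\ $\varphi(G)\ne 0$ and $G\in\calc$.

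The conceptual weight of the argument lies in fact (a): once one observes that $\pi_e(G)$ is automatically a down-set, meet-closure is free and the whole equivalence collapses to the triviality that a finite join-closed family contains the join of all its members, namely $\exp(G)$. I anticipate no genuine technical obstacle here; the only point to treat with a little care is the convention that a ``sublattice'' of $L(n)$ need not contain the top element $n$ --- indeed, when $G\in\calc$ the set $\pi_e(G)$ is the full divisor lattice of $\exp(G)$, a proper sublattice of $L(n)$ unless $\exp(G)=n$.
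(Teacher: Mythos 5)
Your proposal is correct, and it is precisely the ``immediate'' argument the paper has in mind (the paper in fact states this theorem without proof): $G\in\calc$ iff $\exp(G)\in\pi_e(G)$, and since $\pi_e(G)$ is a down-set (hence automatically gcd-closed) with $\lcm\pi_e(G)=\exp(G)$, the sublattice condition reduces exactly to lcm-closure, which for a finite nonempty set is equivalent to containing $\exp(G)$. No gaps; your treatment of both directions matches the intended reasoning.
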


We know that $\calc$ contains some important classes of groups, as
the finite nilpotent groups or the dihedral groups $D_{2n}$ with
$n$ even (notice that another interesting example of a
non-nilpotent finite group contained in $\calc$ has been given by
Professor Derek Holt on MathOverflow -- see \cite{16}). We also
have seen that the symmetric groups $S_n$, for $n\geq 3$, and the
alternating groups $A_n$, for $n\geq 4$, do not belong to
$\calc$. These examples allow us to infer some elementary
properties of $\calc$:
\bigskip

Since $D_{12}\cong\mathbb{Z}_2\times S_3$ is contained in $\calc$,
but it possesses a subgroup, as well as a quotient, isomorphic to
$S_3$, it follows that $\calc$ is not closed under subgroups or
homomorphic images. On the other hand, $S_3$ is an extension of
two groups in $\calc$, namely $\mathbb{Z}_2$ and $\mathbb{Z}_3$,
and its subgroup lattice $L(S_3)$ is isomorphic to
$L(\mathbb{Z}_3\times \mathbb{Z}_3)$. These show that $\calc$ is
also not closed under extensions or $L$-isomorphisms. On the other
hand, $\calc$ is obviously closed under direct products.
\bigskip

Another class of finite groups for which we are able to
characterize the containment to $\calc$ is constituted by {\it
metacyclic groups}. It is well-known that such a group $G$ has a
presentation of the form
\begin{equation}
    \langle a, b \mid a^m = 1, b^n = a^s, b^{-1} a b = a^r \rangle ,\tag{4}
\end{equation}
where $\gcd(m,r)=1$ and $r^n\equiv 1 \hspace{1mm}({\rm
mod} \hspace{1mm}m)$. In particular, if $s=0$, then $G$ is called
{\it split metacyclic}.

\begin{theorem}
A metacyclic group $G$ with the above presentation is contained in
$\calc$ if and only if $n \mid \gcd(m,s)$. In particular, if $G$
is split metacyclic, then it is contained in $\calc$ if and only
if $n \mid m$.
\end{theorem}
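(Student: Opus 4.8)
Throughout, write $A=\langle a\rangle$, so that $|A|=m$, the quotient $G/A=\langle bA\rangle$ is cyclic of order $n$, and $|G|=mn$ (here $o(a)=m$). By definition $G\in\calc$ exactly when $G$ has an element of order $\exp(G)$, so the plan is to compute all element orders of $G$, read off $\exp(G)$, and decide when it is attained.

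\emph{Orders of elements.} Every element of $G$ is uniquely $b^ja^i$ with $0\le i<m$ and $0\le j<n$. From $ab=ba^{r}$ one gets by induction $(b^ja^i)^k=b^{kj}a^{i\sigma_{j,k}}$ with $\sigma_{j,k}=1+r^j+\dots+r^{(k-1)j}$. Putting $d=\gcd(j,n)$, $N=n/d$, $\sigma_j=\sigma_{j,N}$ and using $b^{Nj}=(b^n)^{j/d}=a^{sj/d}$, we get $(b^ja^i)^{N}=a^{\,sj/d+i\sigma_j}\in A$, hence $o(b^ja^i)=\dfrac nd\cdot\dfrac{m}{\gcd\!\big(m,\,sj/d+i\sigma_j\big)}$. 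Two identities will be used repeatedly: $\sigma_j(r^j-1)=r^{Nj}-1\equiv0\pmod m$, and $m\mid s(r-1)$ (equivalent to $o(a)=m$).

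\emph{The exponent.} As $i$ varies, $\gcd(m,\,sj/d+i\sigma_j)$ attains the minimum $\gcd(m,sj/d,\sigma_j)$, so $\max_i o(b^ja^i)=\dfrac nd\cdot\dfrac m{\gcd(m,sj/d,\sigma_j)}$; taking the l.c.m.\ of these over $j$ together with the orders $\le m$ coming from $A$ gives $\exp(G)=\dfrac{mn}{\gcd(m,n,s)}$. One half is immediate, since $a$ has order $m$ and $b$ has order $nm/\gcd(m,s)$, and $\operatorname{lcm}\!\big(m,\,nm/\gcd(m,s)\big)=mn/\gcd(m,n,s)$; for the other half one checks prime by prime that each $o(b^ja^i)$ divides $mn/\gcd(m,n,s)$, using the order formula, $\sigma_j(r^j-1)\equiv0\pmod m$, and the lifting‑the‑exponent lemma to bound $v_p(\sigma_j)$ from below. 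The key observation is that $n\mid\gcd(m,s)$ is equivalent to $\gcd(m,n,s)=n$, i.e.\ to $\exp(G)=m$.

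\emph{Proof of the equivalence, and the main obstacle.} If $n\mid\gcd(m,s)$ then $\exp(G)=m$ and $a$ witnesses it, so $\varphi(G)\ge\varphi(m)>0$ and $G\in\calc$; in the split case $s=0$ this condition is just $n\mid m$. (Here the substantive point is the divisibility $N\mid\sigma_j$, which follows from $n\mid m$, $\gcd(r,N)=1$ and $(r^j)^N\equiv1\pmod N$ by lifting‑the‑exponent; together with $n\mid s$ it shows $\tfrac nd\mid\gcd(m,sj/d,\sigma_j)$, hence $o(b^ja^i)\mid m$ for all $i,j$.) Conversely, suppose $o(g)=\exp(G)$. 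If $g\in A$ then $\exp(G)\mid m$, forcing $\exp(G)=m$ and $n\mid\gcd(m,s)$. If $g=b^ja^i$ with $j\ne0$, comparing $o(g)=\tfrac nd\cdot\tfrac m{\gcd(m,\,sj/d+i\sigma_j)}$ with $\exp(G)=\tfrac{mn}{\gcd(m,n,s)}$ yields $d\cdot\gcd\!\big(m,\,sj/d+i\sigma_j\big)=\gcd(m,n,s)$, and the point is that this is possible only when $\gcd(m,n,s)=n$. \emph{This is the technical heart of the argument:} it is a delicate prime‑by‑prime estimate combining $v_p(\sigma_j)\ge v_p(m)-v_p(r^j-1)$, the exact value of $v_p(r^j-1)$ from lifting‑the‑exponent (the cases $p=2$ and $p\nmid r-1$ handled separately), and $v_p(m)\le v_p(s)+v_p(r-1)$, to show that the $p$‑part of the left‑hand side cannot drop below $v_p(n)$; since the left‑hand side always divides $n$, equality follows. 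Keeping track of how the ``$n$‑part'' $n/d$ and the ``$m$‑part'' $m/\gcd(m,\cdot)$ of an element order trade off — rather than any single computation — is what makes this step nontrivial, and one should fix at the outset the standard normalization of the parameters $(m,n,r,s)$ under which the presentation is taken. The split case $s=0$, where the claim reduces to ``$G$ has an element of order $\operatorname{lcm}(m,n)$ only if $n\mid m$'', is the clean model for the general argument.
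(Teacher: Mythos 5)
Your order formula and exponent computation are fine and essentially the paper's: your $\exp(G)=mn/\gcd(m,n,s)$ is the same quantity as the paper's $\frac{m}{\gcd(m,s)}\lcm(\gcd(m,s),n)$, which the paper gets by reducing to the split case via $o(b)=mn/\gcd(m,s)$, and the ``if'' direction (if $n\mid\gcd(m,s)$ then $\exp(G)=m$ is attained by $a$) is complete in both treatments. The genuine gap is the converse, which you do not prove but defer to a ``delicate prime-by-prime estimate'' claiming that $d\cdot\gcd\bigl(m,\,sj/d+i\sigma_j\bigr)=\gcd(m,n,s)$ forces $\gcd(m,n,s)=n$. That step is not only missing, it cannot be carried out, because the asserted equivalence fails for the presentation as given. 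Take $m=4$, $n=6$, $r=3$, $s=0$ (so $\gcd(m,r)=1$ and $r^n\equiv1\pmod m$): the group is $\mathbb{Z}_4\rtimes\mathbb{Z}_6\cong D_8\times\mathbb{Z}_3$, and the element $b^2a$ (your $b^ja^i$ with $j=2$, $i=1$, hence $d=2$, $N=3$, $\sigma_j=1+r^2+r^4=91$) has order $\frac{6}{2}\cdot\frac{4}{\gcd(4,91)}=12=\exp(G)$, so $G\in\calc$ even though $n=6\nmid 4=\gcd(m,s)$; here $d\cdot\gcd(m,sj/d+i\sigma_j)=2=\gcd(m,n,s)\neq n$, and your hoped-for valuation bound fails at $p=3$. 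Even more trivially, $m=3$, $n=2$, $r=1$, $s=0$ presents $\mathbb{Z}_6$, which lies in $\calc$ although $2\nmid 3$.

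To be fair, the paper's own proof makes exactly the same leap: from $\exp(G)=\lcm(m,n)$ in the split case it concludes ``therefore $G$ belongs to $\calc$ if and only if $n\mid m$'' with no argument for the ``only if'', so you have reproduced the paper's gap rather than repaired it. Your parenthetical that one should fix a normalization of $(m,n,r,s)$ is pointing at the real issue --- the converse can only hold under extra restrictions on the presentation ruling out situations where a large part of $\langle b\rangle$ acts trivially on $\langle a\rangle$, as $b^2$ does above --- but no such normalization is stated in the theorem, formulated by you, or used in your argument, so the ``only if'' half of the proof is absent and, as stated, false.
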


\begin{proof}
Suppose first that $G$ is split.
Then we can easily compute the powers of elements of $G$, namely
$$(b^ia^j)^k=b^{ik}a^{j\frac{r^{ik}-1}{r^i-1}}, \mbox{ for all } 1\leq i\leq
n \mbox{ and } 1\leq j\leq m.$$It follows that $\exp(G)=
\lcm(m,n)$ and therefore $G$ belongs to $\calc$ if and only if
$n \mid m$.

If $G$ is an arbitrary metacyclic group given by (4), then the
order of $b$ is
$$n_1=\frac{mn}{\gcd(m,s)}\hspace{1mm}.$$In other words, we have
$$G= \langle a, b \mid a^m = 1, b^{n_1} = 1, b^{-1} a b = a^r \rangle .$$By
applying the first part of our proof, one obtains
$$\exp(G)=\lcm(m,n_1)=\lcm(\frac{m}{\gcd(m,s)}\hspace{1mm}
\gcd(m,s),\frac{m}{\gcd(m,s)}\hspace{1mm}n)$$
$$\hspace{-28mm}=\frac{m}{\gcd(m,s)}\hspace{1mm}\lcm(
\gcd(m,s),n).$$So, $G$ belongs to $\calc$ if and only if $n_1
\mid m$, that is $n \mid \gcd(m,s)$.
\end{proof}

\noindent{\bf Remark.} By taking $n=2$ and $r=m-1$ in Theorem 4.2,
$G$ becomes the dihedral group $D_{2m}$. In this way, $D_{2m}$
belongs to $\calc$ if and only if $m$ is even, a result that can
be also inferred directly from Theorem 2.6.

\begin{theorem}
If $G$ is a finite group of exponent $m$, then the direct product
$\mathbb{Z}_m\times G$ is contained in $\calc$. In particular, any
finite group is a quotient of a group in $\calc$.
\end{theorem}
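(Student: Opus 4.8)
The plan is to prove the result directly, by exhibiting in $\mathbb{Z}_m\times G$ an element whose order equals the exponent of $\mathbb{Z}_m\times G$; by definition of $\varphi$ this immediately gives $\varphi(\mathbb{Z}_m\times G)\neq 0$, i.e. $\mathbb{Z}_m\times G\in\calc$. The first step is to identify the exponent of the product. For arbitrary finite groups $A$ and $B$ one has $\exp(A\times B)=\lcm(\exp(A),\exp(B))$, since the order of $(a,b)$ is $\lcm(o(a),o(b))$ and hence the maximum of these orders is $\lcm(\exp A,\exp B)$; this is the general fact underlying the coprime-order computation in the proof of Lemma 2.1. Applying it with $A=\mathbb{Z}_m$ and $B=G$, and using the hypothesis $\exp(G)=m=\exp(\mathbb{Z}_m)$, we obtain $\exp(\mathbb{Z}_m\times G)=m$.

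Next I would produce an element of order $m$. The pair $(\ov 1,1_G)$, where $\ov 1$ is a generator of $\mathbb{Z}_m$ and $1_G$ is the identity of $G$, has order $\lcm(m,1)=m=\exp(\mathbb{Z}_m\times G)$, so it lies in the set $M(\mathbb{Z}_m\times G)=\{x\mid o(x)=\exp(\mathbb{Z}_m\times G)\}$. Hence $\varphi(\mathbb{Z}_m\times G)\geq 1>0$ and $\mathbb{Z}_m\times G\in\calc$. (In fact one even gets $\varphi(\mathbb{Z}_m\times G)\geq\varphi(m)\,|G|$, since every $(a,b)$ with $a$ a generator of $\mathbb{Z}_m$ and $b$ arbitrary has order $m$, but only the non-vanishing is needed for membership in $\calc$.)

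Finally, for the ``in particular'' clause I would use the canonical projection $p:\mathbb{Z}_m\times G\to G$, $(a,b)\mapsto b$, which is a surjective group homomorphism; thus $G$ is a homomorphic image of $\mathbb{Z}_m\times G$. Since every finite group $G$ has a well-defined exponent $m=\exp(G)$, and the group $\mathbb{Z}_m\times G$ just constructed belongs to $\calc$, it follows that every finite group is a quotient of a group in $\calc$. I do not expect any genuine obstacle here: the only point that needs a (one-line) justification beyond the definitions is the elementary identity $\exp(A\times B)=\lcm(\exp A,\exp B)$, which holds without any coprimality assumption.
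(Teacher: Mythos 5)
Your proof is correct and follows essentially the same route as the paper: both use $o(a,b)=\lcm(o(a),o(b))$ to get $\exp(\mathbb{Z}_m\times G)=m$, exhibit the pair (generator of $\mathbb{Z}_m$, identity of $G$) as an element of order $m$ so that $\varphi(\mathbb{Z}_m\times G)\neq 0$, and the ``in particular'' clause is exactly the projection onto the second factor. One small caution: the exponent is the least common multiple of the element orders (the least $n$ with $x^n=1$ for all $x$), not their maximum, so the identity $\exp(A\times B)=\lcm(\exp A,\exp B)$ should be justified that way rather than via ``the maximum of these orders'' (in $S_3\times\{1\}$ the maximal order is $3$ while the exponent is $6$); this does not affect your application, since $(\overline{1},1_G)$ really does attain the exponent $m$ there.
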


\begin{proof}
Since $o(a,b)=\lcm(o(a),o(b))$, for all $(a,b)\in
\mathbb{Z}_m\times G$, one obtains that $\exp(\mathbb{Z}_m\times
G)=m$. On the other hand, $\mathbb{Z}_m\times G$ obviously
possesses elements of order $m$ (for example, any element of type
$(b,1)$, where $b$ is a generator of $\mathbb{Z}_m$).
\end{proof}

\begin{cor}
$\calc$ is not contained in the class of finite solvable groups.
\end{cor}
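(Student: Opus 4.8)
The plan is to exhibit a single, concrete non-solvable group in $\calc$ and thereby conclude that $\calc$ is not contained in the class of finite solvable groups. The natural source of such examples is Theorem 4.3: for any finite group $G$ of exponent $m$, the direct product $\mathbb{Z}_m \times G$ lies in $\calc$. So the entire argument reduces to choosing $G$ to be some fixed non-solvable finite group and then observing that $\mathbb{Z}_m \times G$ is itself non-solvable. First I would take $G = A_5$, the smallest non-solvable group, whose element orders are $1, 2, 3, 5$, so that $m = \exp(A_5) = 30$. By Theorem 4.3 the group $\mathbb{Z}_{30} \times A_5$ belongs to $\calc$.

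The remaining point is that $\mathbb{Z}_{30} \times A_5$ is not solvable. This is immediate: $A_5$ is a quotient (equivalently, a direct factor, hence a subgroup) of $\mathbb{Z}_{30} \times A_5$, and solvability is inherited by subgroups and quotients; since $A_5$ is non-solvable, neither is $\mathbb{Z}_{30} \times A_5$. Putting these two observations together gives a finite non-solvable group in $\calc$, which is exactly the assertion of the corollary.

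There is essentially no obstacle here — the work has already been done in Theorem 4.3, and the corollary is simply the instantiation $G = A_5$ together with the trivial fact that attaching an abelian direct factor cannot make a non-solvable group solvable. If one wished to be even more economical, one could alternatively recall that $\mathbb{Z}_m \times G$ surjects onto $G$ (the "in particular" clause of Theorem 4.3), so that taking $G$ non-solvable forces $\mathbb{Z}_m \times G$ to be non-solvable as well. I would write the proof in one or two sentences along these lines.

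\begin{proof}
Apply Theorem 4.3 with $G=A_5$. Since $\pi_e(A_5)=\{1,2,3,5\}$, we have $\exp(A_5)=30$, so $\mathbb{Z}_{30}\times A_5\in\calc$. But $A_5$ is a quotient of $\mathbb{Z}_{30}\times A_5$, and a group having a non-solvable quotient cannot be solvable; hence $\mathbb{Z}_{30}\times A_5$ is a non-solvable member of $\calc$.
\end{proof}
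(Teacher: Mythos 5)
Your proof is correct and follows essentially the same route as the paper: invoke Theorem 4.3 for a non-solvable group $G$ of exponent $m$ and note that $\mathbb{Z}_m\times G$ surjects onto $G$, so it cannot be solvable. The only (harmless) difference is that you specialize to $G=A_5$ with $\exp(A_5)=30$, while the paper argues with an arbitrary non-solvable $G$.
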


\begin{proof}
Let $G$ be a non-solvable group with $\exp(G)=m$. Then
$\mathbb{Z}_m\times G$ is contained in $\calc$ by the above
theorem. On the other hand, $\mathbb{Z}_m\times G$ is not
sol\-va\-ble since the class of (finite) solvable groups is closed
under homomorphic images.
\end{proof}

\noindent{\bf Remark.} The converse inclusion also fails (in fact,
even the classes of CLT-groups or supersolvable groups are not
contained in $\calc$). We are also able to describe the
intersections of $\calc$ with the classes of ZM-groups (i.e. the
finite groups with all Sylow subgroups cyclic) and CP-groups (i.e.
the finite groups with all elements of prime power orders): they
consist of the finite cyclic groups and of the finite $p$-groups,
respectively.
\bigskip

In order to study whether a finite group $G$ of
exponent $\prod_{i=1}^k p_i^{\beta_i}$ belongs to $\calc$, the connections
between the sets $M(G)=\{a\in G \mid o(a)=\exp(G)\}$ and
$M_i(G)=\{a\in G \mid o(a)=p_i^{\beta_i}\}$, $i=1,2,\dots,k$, are
essential. Clearly, if $G$ is nilpotent, then there is a bijection
from $M(G)$ to the cartesian product of $M_i(G)$, $i=1,2,...,k$.
Since every set $M_i(G)$ is nonempty, so is $M(G)$ and therefore
$G$ belongs to $\calc$. In general, we have
\begin{equation}
M(G)\neq \emptyset \Longleftrightarrow \exists \hspace{1mm}a_i\in M_i(G),
    i=\ov{1,k}, \text{with} \hspace{1mm} a_ia_j=a_ja_i
    \hspace{1mm}\text{for all} \hspace{1mm} i\neq j\,. \tag{5}
\end{equation}
We remark that the condition in the right side of (5) can be
obtained (by replacing $"\forall"$ with $"\exists"$) from the
condition
$$\forall \hspace{1mm}a_i\in M_i(G), i=\ov{1,k}, \text{we have} \hspace{1mm} a_ia_j=a_ja_i
\hspace{1mm}\text{for all} \hspace{1mm} i\neq j\,,$$that
characterizes the nilpotency of $G$.
\bigskip

Obviously, a finite group $G$ is contained in $\calc$ if and only
if $\varphi(G)=p$\, for some non-zero positive integer $p$. In
this way, by fixing $p\in\mathbb{N}^*$, the study of the equation
$\varphi(G)=p$ (where the solutions $G$ are considered up to
group isomorphism) is essential. We end this section by solving it
in the particular case when $p$ is a prime.

As we have seen in Section 2, if ${\rm exp}(G)=m$, then
$\varphi(G)=\varphi(m)k$, where $k$ denotes the number of cyclic
subgroups of order $m$ in $G$. On the other hand, we already know
that for $p$ odd our equation has a solution if and only if $p$ is
of type $2^q-1$, and moreover this solution is unique: the
elementary abelian group $\mathbb{Z}_2^q$. So, in the following we
may assume that $p=2$. This implies $k=1$ and $\varphi(m)=2$, that
is $m\in\{3,4,6\}$.

\medskip{\bf\hspace{10mm} Case 1.} $m=3$

\noindent In this case $G$ is an elementary abelian 3-group. Since
it possesses only one subgroup of order 3, one obtains $G\cong
\mathbb{Z}_3$.

\medskip{\bf\hspace{10mm} Case 2.} $m=4$

\noindent In this case $G$ is a 2-group. Let $\langle x\rangle$ be
the unique cyclic subgroup of order 4 of $G$. Since the quotient
$G/\langle x^2\rangle$ is elementary abelian, we infer that
$G'\subseteq\Phi(G)\subseteq\langle x^2\rangle$. If $G'$ is
trivial, then $G$ is abelian. Clearly, the uniqueness of $\langle
x\rangle$ implies that $G\cong\mathbb{Z}_4$. Suppose now that
$G'=\Phi(G)=\langle x^2\rangle$ and let $y\in Z(G)$. Then the
cyclic subgroup $\langle xy\rangle$ is of order 4. Therefore we
have $\langle xy\rangle=\langle x\rangle$, that is $y\in\langle
x\rangle$. This shows that $Z(G)\subseteq\langle x\rangle$. If
$Z(G)=\langle x\rangle$, then we easily obtain that all elements
of $G$ are contained in $Z(G)$, i.e. $G$ is abelian, a
contradiction. In this way, $Z(G)$ also coincides with $\langle
x^2\rangle$, proving that $G$ is extraspecial. It follows that $G$
is either a central product of $n$ dihedral groups of order 8 or a
central product of $n-1$ dihedral groups of order 8 and a
quaternion group of order 8, where $\mid
G\mid\hspace{1mm}=2^{2n+1}$ (see Theorem 4.18 of \cite{14}, II).
By using again the uniqueness of $\langle x\rangle$, we infer that
$n=1$ and $G\cong D_8$.

\medskip{\bf\hspace{10mm} Case 3.} $m=6$

\noindent Let $H$ be the unique cyclic subgroup of order 6 in $G$
and $M_1\cong\mathbb{Z}_2$, $M_2\cong\mathbb{Z}_3$ be the (unique)
non-trivial subgroups of $H$. Since $H$ is normal, one obtains
$H=H^x=(M_1M_2)^x=M_1^xM_2^x$, for all $x\in G$, which implies
that both $M_1$ and $M_2$ are also normal in $G$.

Let $M_2'\leq G$ with $\mid M_2'\mid\hspace{1mm}=3$. Then
$M_1M_2'$ is a subgroup of order 6 of $G$ and $M_1$ is normal in
$M_1M_2'$. We infer that $M_1M_2'\cong\mathbb{Z}_6$ and so
$M_1M_2'=H$. This leads to $M_2'=M_2$, that is $M_2$ is the unique
subgroup of order 3 of $G$. It is well-known that a $p$-group
containing only one subgroup of order $p$ is either cyclic or
generalized quaternion (see, for example, (4.4) of \cite{14}, II).
In our case, it follows that all Sylow 3-subgroups of $G$ are
cyclic. Then $M_2$ is in fact the unique Sylow 3-subgroup of $G$,
because ${\rm exp}(G)=6$. This leads to $\mid
G\mid\hspace{1mm}=2^n \cdot 3$ for some $n\in\mathbb{N}^*$.

Let $n_2$ the number of Sylow 2-subgroups of $G$ and denote by $S$
such a subgroup. Then $S$ is elementary abelian and we have
$$n_2\mid3 \mbox{ and } n_2\equiv 1 \hspace{1mm}({\rm mod}\hspace{1mm}2),$$
i.e. $n_2\in\{1,3\}$. If $n_2=1$, then $G$ is nilpotent, more
precisely $G\cong S\times M_2$. It is now easy to see that the
uniqueness of a cyclic subgroup of order 6 in $G$ implies that $S$
is of order 2 (in fact $S=M_1$) and thus $G\cong\mathbb{Z}_6$. If
$n_2=3$, then we must have $\mid {\rm
Core}_G(S)\mid\hspace{1mm}=2$. Since $G/{\rm Core}_G(S)$ can be
embedded in $S_3$, we infer that $\mid G\mid\hspace{1mm}=12$ and
$G\cong D_{12}$.
\bigskip

Hence we have proved the following theorem.

\begin{theorem}
Let $p$ be a prime. Then the equation $\varphi(G)=p$ has solutions
if and only if either $p$ is odd of the form $2^q-1$ or $p=2$. In
the first case we have a unique solution, namely the elementary
abelian {\rm 2}-group $\mathbb{Z}_2^q$, while in the second case
we have five non-isomorphic solutions, namely the cyclic groups
$\mathbb{Z}_3$, $\mathbb{Z}_4$, $\mathbb{Z}_6$ and the dihedral
groups $D_8$ and $D_{12}$.
\end{theorem}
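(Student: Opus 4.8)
The plan is to exploit the reduction already set up in the text: writing $m = \exp(G)$ and recalling that $\varphi(G) = \varphi(m)k$ where $k$ is the number of cyclic subgroups of order $m$, the hypothesis $\varphi(G) = p$ with $p$ prime forces a short list of arithmetical possibilities. First I would dispose of the odd case: if $p$ is odd then $\varphi(m)k = p$ with $p$ odd, and since $\varphi(m)$ is even whenever $m \geq 3$, the only way to get an odd value is $m = 2$ (forcing $\varphi(m) = 1$) or $m = 1$; the case $m = 2$ was already analysed in Section 2, where $G \cong \mathbb{Z}_2^q$ and $k = 2^q - 1$, so $p = 2^q - 1$ and the solution is unique, while $m = 1$ gives the trivial group with $\varphi = 1$, not prime. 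Hence for $p$ odd the statement is exactly the earlier observation. For $p = 2$ we get $\varphi(m)k = 2$, and since $k \geq 1$ and $\varphi(m) \geq 1$, either $\varphi(m) = 2, k = 1$ or $\varphi(m) = 1, k = 2$; the latter is impossible since $\varphi(m) = 1$ means $m \in \{1,2\}$ and then $G$ cyclic of order $\leq 2$ has $k = 1$. So we are reduced to $k = 1$ and $\varphi(m) = 2$, i.e.\ $m \in \{3, 4, 6\}$.

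The core of the argument is then a case analysis over $m \in \{3,4,6\}$ under the standing assumption that $G$ has a \emph{unique} cyclic subgroup of order $m$. For $m = 3$: $G$ is a $3$-group of exponent $3$, hence elementary abelian, and uniqueness of the order-$3$ subgroup forces $|G| = 3$. For $m = 4$: $G$ is a $2$-group; letting $\langle x \rangle$ be the unique cyclic subgroup of order $4$, one checks $G' \subseteq \Phi(G) \subseteq \langle x^2 \rangle$ (since $G / \langle x^2 \rangle$ has exponent $2$, hence is abelian). If $G$ is abelian, uniqueness gives $G \cong \mathbb{Z}_4$; otherwise $G' = \Phi(G) = \langle x^2 \rangle$ has order $2$, and a short argument (if $y \in Z(G)$ then $\langle xy \rangle$ is cyclic of order $4$, so $\langle xy \rangle = \langle x \rangle$, forcing $y \in \langle x \rangle$, and $Z(G) = \langle x \rangle$ would make $G$ abelian) shows $Z(G) = \langle x^2 \rangle$, so $G$ is extraspecial. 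The classification of extraspecial $2$-groups as central products of copies of $D_8$ and at most one $Q_8$, combined again with uniqueness of $\langle x \rangle$, forces $|G| = 8$ and $G \cong D_8$.

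The case $m = 6$ is the one I expect to require the most care, since $G$ need not be a $p$-group or nilpotent. Here $H$ denotes the unique cyclic subgroup of order $6$; its subgroups $M_1 \cong \mathbb{Z}_2$ and $M_2 \cong \mathbb{Z}_3$ are characteristic in $H$, and normality of $H$ in $G$ (it is the unique subgroup of its order) propagates to make $M_1, M_2 \trianglelefteq G$. A short argument shows $M_2$ is the unique subgroup of order $3$ in $G$: any $M_2' \leq G$ of order $3$ gives a subgroup $M_1 M_2'$ of order $6$ with $M_1$ normal, hence $M_1 M_2' \cong \mathbb{Z}_6 = H$, so $M_2' = M_2$. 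Then every Sylow $3$-subgroup is cyclic (a $p$-group with a unique subgroup of order $p$ is cyclic or generalized quaternion, and the latter is excluded for $p = 3$), and since $\exp(G) = 6$ this Sylow $3$-subgroup is exactly $M_2$; thus $|G| = 2^n \cdot 3$. Now a Sylow $2$-subgroup $S$ has exponent $2$, hence is elementary abelian, and Sylow's theorem gives $n_2 \in \{1, 3\}$. If $n_2 = 1$ then $G \cong S \times M_2$ is nilpotent, and uniqueness of the cyclic subgroup of order $6$ forces $|S| = 2$, so $G \cong \mathbb{Z}_6$; if $n_2 = 3$ then $|\mathrm{Core}_G(S)| = 2$ and $G / \mathrm{Core}_G(S)$ embeds in $S_3$, forcing $|G| = 12$ and $G \cong D_{12}$. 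Assembling the three cases (together with the verification that $D_8$ and $D_{12}$ do indeed satisfy $\varphi = 2$, which follows from Corollary~2.6 and the $m = 6$ analysis) yields the five claimed solutions for $p = 2$, completing the proof.
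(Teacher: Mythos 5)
Your proposal is correct and follows essentially the same route as the paper: the same reduction $\varphi(G)=\varphi(m)k$ with $m=\exp(G)$, the same disposal of the odd case via the elementary abelian $2$-groups, and the same case analysis $m\in\{3,4,6\}$ with identical structural arguments (extraspecial classification for $m=4$; normality of $M_1,M_2$, uniqueness of the Sylow $3$-subgroup, and the core/embedding-in-$S_3$ argument for $m=6$). No substantive difference to report.
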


\section{Conclusions and further research}

The study of the structure of finite groups through their sets of
elements of maximal orders is an interesting and difficult topic
in finite group theory. In our paper we introduced a
generalization of the Euler's totient function that counts the
elements of (maximal) order $\exp(G)$ of a finite group $G$. It is
clear that the study of its properties and applications, as well
as the study of several related classes of finite groups (as
$\calc$), can be continued in many ways. These will surely be the
subject of some further research.
\bigskip

We end this paper by indicating a list of open problems concerning
our previous results.
\bigskip

\noindent{\bf Problem 5.1.} Answer the question in the end of
Section 3.
\bigskip

\noindent{\bf Problem 5.2.} Compute explicitly $\varphi(G)$ for
other remarkable classes of finite groups $G$, such as ZM-groups,
metacyclic groups, supersolvable groups, solvable groups, \dots,
and so on.
\bigskip

\noindent{\bf Problem 5.3.} Study the equation $\varphi(G)=p$,
where $p$ is an {\it arbitrary} positive integer. When does this
equation have a unique solution (up to group isomorphism)?
\bigskip

\noindent{\bf Problem 5.4.} Given a finite group $G$, in general
we don't have $\varphi(H)\mid\varphi(G)$\newline or
$\varphi(H)\leq\varphi(G)$ for all subgroups $H$ of $G$. Study the
classes $\calc_1$ and $\calc_2$ consisting of all finite groups
that satisfy these conditions (remark that both $\calc_1$ and
$\calc_2$ contain the finite cyclic groups, but they also contain
some noncyclic groups, such as $Q_8$).
\bigskip

\noindent{\bf Problem 5.5.} Study the classes $\calc^*$ and
$\calc^{**}$ of all finite groups all of whose
subgroups (respectively quotients) belong to $\calc$ (remark that a group
$G$ in $\calc^*$ satisfies a well-known condition in finite group
theory, usually called the "$pq$-condition": for any two distinct
primes $p$ and $q$, every subgroup of order $pq$ of $G$ is
cyclic).
\bigskip
\bigskip

\noindent{\bf Acknowledgements}. The author wishes to thank
Professor Tom De Medts for his help in proving Theorem 2.8.

\vspace*{5ex}\small

\hfill
\begin{minipage}[t]{5cm}
Marius T\u arn\u auceanu \\
Faculty of  Mathematics \\
``Al.I. Cuza'' University \\
Ia\c si, Romania \\
e-mail: {\tt tarnauc@uaic.ro}
\end{minipage}

\end{document}